\documentclass[12pt]{amsart}
\usepackage[english]{babel}

\usepackage[latin1]{inputenc}
\usepackage{float}
\usepackage{latexsym}
\usepackage{amsmath}
\usepackage{amssymb}
\usepackage{graphicx}
\usepackage{amsthm}
\usepackage{amsfonts}

\usepackage[T1]{fontenc}
\newcommand{\assign}{:=}
\newcommand{\cdummy}{\cdot}
\newcommand{\nobracket}{}
\newcommand{\nocomma}{}
\newcommand{\noplus}{}
\newcommand{\nosymbol}{}
\newcommand{\plusassign}{+\!\!=}

\newcommand{\tmverbatim}[1]{{\ttfamily{#1}}}

\newtheorem{definition}{Definition}[section]
\newtheorem{corollary}[definition]{Corollary}
\newtheorem{example}[definition]{Example}
\newtheorem{proposition}[definition]{Proposition}
\newtheorem{remark}[definition]{Remark}
\newtheorem{theorem}[definition]{Theorem}

\newcommand{\xb}{\mathbf{x}}

\newcommand{\vb}{\mathbf{v}}
\newcommand{\wb}{\mathbf{w}}
\newtheorem{algo}{Algorithm}
\newcommand{\CC}{\mathbb{C}}
\newcommand{\Ac}{\mathcal{A}}

\newcommand{\RR}{\mathbb{R}}
\newcommand{\rank}{\mathrm{rank}}
\newcommand{\trace}{\mathrm{trace}}

\newcommand{\algorithm}[1]{\begin{algo}\ \\{#1}\end{algo}}

\begin{document}

\title{On the construction of general cubature formula by flat extensions}

\author{M. Abril Bucero}
\address{M.Abril Bucero, B. Mourrain: Inria Sophia Antipolis M\'editerran\'ee\\
2004 route des Lucioles, BP 93,\\
06902 Sophia Antipolis, France\\
\ \\}

\author{C. Bajaj}
\address{C. Bajaj: Department of Computer Science\\
The Institute of Computational Engineering and Sciences, \\
The University of Texas at Austin,\\
201 East 24th Street, POB 2.324A,\\
1 University Station, C0200 Austin, TX 78712-0027, USA}

\author{B. Mourrain}


\maketitle

\begin{abstract}
  We describe a new method to compute general cubature formulae. The
  problem is initially transformed into the computation of truncated Hankel operators
  with flat extensions. We then analyse the algebraic properties associated to flat
  extensions and show how to recover the cubature points and weights from the
  truncated Hankel operator. We next present an algorithm to test the flat extension
  property and to additionally compute the decomposition. To generate cubature
  formulae with a minimal number of points, we propose a new relaxation
  hierarchy of convex optimization problems  minimizing the nuclear norm
  of the Hankel operators. For a suitably high order of
  convex relaxation, the minimizer of the optimization problem
  corresponds to a cubature formula. Furthermore cubature formulae with a minimal
  number of points are associated to faces of the convex sets. We illustrate our
  method on some examples, and for each we obtain a new minimal
  cubature formula.
\end{abstract}

\section{Cubature formula}

\subsection{Statement of the problem}

Consider the integral for a continuous function $f$,
\[ I [f] = \int_{\Omega} w (\xb) f (\xb) d \xb \]
where $\Omega \subset \mathbb{R}^n$ and $w$ is a positive function on
$\Omega$.

We are looking for a cubature formula which has the form
\begin{equation}
  \label{eq:cubature} \langle \sigma |f \rangle = \sum_{j = 1}^r
  \hspace{0.17em} w_j  \hspace{0.17em} f (\zeta_j)
\end{equation}
where the points $\zeta_j \in \CC^n$ and the weights $w_j \in \RR$ are
independent of the function $f$. They are chosen so that
\[ \langle \sigma |f \rangle = I [f], \forall f \in V, \]
where $V$ is a finite dimensional vector space of functions. Usually, the
vector space $V$ is the vector space of polynomials of degree $\le d$, because
a well-behaved function $f$ can be approximated by a polynomial, so that $Q
[f]$ approximates the integral $I [f]$.

Given a cubature formula (\ref{eq:cubature}) for $I$, its algebraic degree is
the largest degree $d$ for which $I [f] = \langle \sigma |f \rangle$ for all
$f$ of degree $\le d$.

\subsection{Related works }

Prior approaches to the solution of cubature problem can be grouped into
roughly two classes. One, where the goal is to estimate the fewest weighted,
aka cubature points possible for satisfying a prescribed cubature rule of
fixed degree \cite{C1,Mo2,Mo3,My1,My6,P2}.
The other class, focusses on the determination and construction of cubature rules which would
yield the fewest cubature points possible \cite{MP,R,S,SX,Str1,Str3,X1,X3}.
In {\cite{R}}, for example, Radon
introduced a fundamental technique for constructing minimal cubature rules
where the cubature points are common zeros of multivariate orthogonal
polynomials. This fundamental technique has since been extended by many,
including for e.g. {\cite{P2,Str3,X3}} where notably, the paper {\cite{X3}}
use multivariate ideal theory, while {\cite{P2}} uses operator dilation
theory. In this paper, we propose another approach to the second class of
cubature solutions, namely, constructing a suitable finite dimensional Hankel
matrix and extracting the cubature points using sub operators of the Hankel
matrix {\cite{I}}. This approach is related to
{\cite{laurent-ams-2005,laurent-ima-2008,ML08}} and which in turn are based on
the methods of multivariate truncated moment matrices, their positivity and
extension properties {\cite{cf-hjm-1991,CF98,CF00}}.

Applicatons of such algorithms determining cubature rules and cubature points
over general domains occur in isogeometric modeling and finite element
analysis using generalized Barycentric finite elements
{\cite{GB2011,GRB2012,RGB2013,RGB2014}}. Additional applications
abound in numerical integration for low dimensional (6-100 dimensions)
convolution integrals that appear naturally in computational molecular
biology {\cite{BBBV2013,AB2014}}, as well in truly high dimensional
(tens of thousands of dimensions) integrals that occur in finance
{\cite{NT1996,CMO1997}}.

\subsection{Reformulation}

Let $R = \RR [\xb]$ be the ring of polynomials in the variables $\xb = (x_1,
\ldots, x_n)$ with coefficients in $\RR$. Let $R_d$ be the set of polynomials
of degree $\leqslant d$. The set of linear forms on $R$, that is, the set of
linear maps from $R$ to $\RR$ is denoted by $R^{\ast}$. The value of a linear
form $\Lambda \in R^{\ast}$ on a polynomial $p \in R$ is denoted by $\langle
\Lambda | \nobracket p \rangle$. The set  $R^{\ast}$ can be identified with the  ring of formal power series in new variables
$\mathbf{z}= (z_1, \ldots, z_n)$:
\begin{eqnarray*}
  R^{\ast} & \rightarrow & \RR [[\mathbf{z}]]\\
  \Lambda & \mapsto & \Lambda (\mathbf{z}) = \sum_{\alpha \in \mathbb{N}^n}
  \langle \Lambda | \nobracket \mathbf{x}^{\alpha} \rangle
  \mathbf{z}^{\alpha} .
\end{eqnarray*}
The coefficients $\langle \Lambda | \nobracket \mathbf{x}^{\alpha} \rangle$
of these series are called the {\em{moments}} of $\Lambda$.
The evaluation at a point $\zeta \in \RR^n$ is an element of $R$,  denoted by
$\mathbf{e}_{\zeta}$, and defined by $\mathbf{e}_{\zeta} : f \in R
\mapsto f (\zeta) \in \mathbb{R}$.
For any $p \in R$ and any $\Lambda \in R^{\ast}$, let $p \star \Lambda : q
\in R \mapsto \Lambda (p q)$.

\

\textbf{Cubature problem:} Let $V \subset R$ be the vector space of
polynomials and consider the linear form $\bar{I} \in V^{\ast}$ defined by
\begin{eqnarray*}
  \bar{I} : V & \rightarrow & \RR\\
  \vb & \mapsto & I [\vb]
\end{eqnarray*}
Computing a cubature formula for $I$ on $V$ then consists in finding a linear form
\[ \sigma = \sum_{i = 1}^r w_i \mathbf{e}_{\zeta_i} : f \mapsto \sum_{j =
   1}^r \hspace{0.17em} w_j f (\zeta_j) . \]
which coincides on $V$ with $\bar{I}$. In other words, given the linear form
$\bar{I}$ on $R_d$, we wish to find a linear form $\sigma = \sum_{i = 1}^r w_i
\mathbf{e}_{\zeta_i}$ which extends $\bar{I}$.

\section{Cubature formulae and Hankel operators}

To find such a linear form $\sigma \in R^{\ast}$, we exploit the properties of
its associated bilinear form $H_{\sigma} : (p, q) \in R \times R \rightarrow
\left\langle \sigma |p \hspace{0.17em} q \right\rangle$, or equivalently, the
associated Hankel operator:
\begin{eqnarray*}
  H_{\sigma} : R & \rightarrow & R^{\ast}\\
  p & \mapsto & p \star \sigma
\end{eqnarray*}
The kernel of $H_{\sigma}$ is $\ker H_{\sigma} = \left\{ p \in R \mid \forall
q \in R, \left\langle \sigma |p \hspace{0.17em} q \right\rangle = 0 \right\}$.
It is an ideal of $R$. Let $\mathcal{A}_{\sigma} = R / \ker H_{\sigma}$ be
the associated quotient ring.

The matrix of the bilinear form or the Hankel operator $H_{\sigma}$
associated to $\sigma$ in the monomial basis, and its dual are $(\langle \Lambda
| \nobracket \mathbf{x}^{\alpha + \beta} \rangle)_{\alpha, \beta \in
\mathbb{N}^n}$. If we restrict them to a space $V$ spanned by the monomial
basis $(\mathbf{x}^{\alpha})_{\alpha \in A}$ for some finite set $A \subset
\mathbb{N}^n$, we obtain a finite dimensional matrix
{\textbf{}}$[H_{\sigma}^{A, A}] = (\langle \Lambda | \nobracket
\mathbf{x}^{\alpha + \beta} \rangle)_{\alpha, \beta \in A}$, and which is a
Hankel matrix.. More generally, for any vector spaces $V, V' \subset R$, we
define the truncated bilinear form and Hankel operators: $H^{V, V'}_{\sigma} :
(v, v') \in V \times V' \mapsto \langle \sigma |p q \rangle \in \mathbb{R}$
and $H^{V, V'}_{\sigma} : v \in V \mapsto v \star \sigma \in V'^{\ast}$. If
$V$ (resp. $V'$) is spanned by a monomial set $\mathbf{x}^A$ for $A \subset
\mathbb{N}^n $(resp. $\mathbf{x}^B$ for $B \subset \mathbb{N}^n$), the
truncated bilinear form and truncated Hankel operator are also denoted by
$H_{\sigma}^{A, B}$. The associated  Hankel matrix in the monomial basis is then \
{\textbf{}}$[H_{\sigma}^{A, B}] = (\langle \Lambda | \nobracket
\mathbf{x}^{\alpha + \beta} \rangle)_{\alpha \in A, \beta \in B}$.

The main property that we will use to characterize a cubature formula is the
following (see \cite{LaurentSurvey09,lasserre:hal-00651759}):

\begin{proposition}
  A linear form $\sigma \in R^{\ast}$ can be decomposed as $\sigma = \sum_{i =
  1}^r w_i \hspace{0.17em} \mathbf{e}_{\zeta_i}$ with $w_i \in \CC \setminus
  \{0\}$, $\zeta_i \in \CC^n$ iff
  \begin{itemize}
    \item $H_{\sigma} : p \mapsto p \star \sigma$ is of rank $r$,

    \item $\ker H_{\sigma}$ is the ideal of polynomials vanishing at the
    points $\{ \zeta_1, \ldots, \zeta_r \}$.
  \end{itemize}
\end{proposition}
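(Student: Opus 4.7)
The plan is to prove the two implications separately, exploiting the fact that $\ker H_{\sigma}$ is an ideal and that the induced map $\bar{H}_\sigma \colon \Ac_{\sigma} \to R^{\ast}$ is injective by construction, so $\dim_{\CC} \Ac_{\sigma} = \rank H_{\sigma}$.

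For the forward direction ($\Rightarrow$), suppose $\sigma = \sum_{i=1}^r w_i \eb_{\zeta_i}$ with distinct $\zeta_i$ and nonzero $w_i$. A direct computation $\langle p\star\sigma | q\rangle = \langle \sigma | pq\rangle = \sum_i w_i p(\zeta_i) q(\zeta_i)$ shows $p\star\sigma = \sum_i w_i p(\zeta_i)\, \eb_{\zeta_i}$, so $\operatorname{Im} H_{\sigma}$ lies in the span of the $\eb_{\zeta_i}$, which is $r$-dimensional by linear independence of evaluations at distinct points. Taking Lagrange interpolators $\ell_i$ with $\ell_i(\zeta_j) = \delta_{ij}$ gives $\ell_i \star \sigma = w_i \eb_{\zeta_i}$, which are $r$ independent image vectors since $w_i \neq 0$; hence $\rank H_{\sigma} = r$. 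From the same formula, $p \in \ker H_{\sigma}$ iff $w_i p(\zeta_i) = 0$ for all $i$, iff $p$ vanishes on every $\zeta_i$, which characterizes the vanishing ideal.

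For the converse ($\Leftarrow$), assume $\rank H_{\sigma} = r$ and $\ker H_{\sigma} = I(\{\zeta_1,\dots,\zeta_r\})$. The key algebraic step is to observe that $\Ac_{\sigma} = R/\ker H_{\sigma}$ has dimension $r$ and, by the Chinese Remainder Theorem applied to the radical ideal of $r$ distinct points, decomposes as $\Ac_{\sigma} \cong \CC^r$ with factor projections given by evaluation at the $\zeta_i$. I would then choose representatives $u_i \in R$ of the corresponding orthogonal idempotents, so that $u_i(\zeta_j) = \delta_{ij}$, and define $w_i \assign \langle \sigma | u_i \rangle$. For any $p \in R$, the polynomial $p - \sum_i p(\zeta_i)\,u_i$ vanishes on each $\zeta_j$, hence lies in $\ker H_{\sigma}$; pairing with $1$ (using $\langle q\star\sigma | 1\rangle = \langle \sigma | q\rangle$) gives $\langle \sigma | p\rangle = \sum_i w_i p(\zeta_i)$, i.e.\ $\sigma = \sum_i w_i \eb_{\zeta_i}$. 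Nonvanishing of each $w_i$ follows because if some $w_i = 0$ the forward direction applied to the truncated sum would force $\rank H_{\sigma} \leq r-1$, contradicting the hypothesis.

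The main obstacle is the converse, and specifically the structural step identifying $\Ac_{\sigma}$ with the coordinate ring of the $r$ points and lifting the idempotents back to polynomials $u_i$; once that is in place, recovering the weights by evaluating $\sigma$ on the $u_i$ and verifying the reconstruction formula is essentially bookkeeping through the definition of $\ker H_{\sigma}$.
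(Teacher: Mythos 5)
Your proof is correct. Note that the paper itself does not prove this proposition: it is quoted with a pointer to the literature (\cite{LaurentSurvey09,lasserre:hal-00651759}), so there is no internal argument to compare against; your write-up is essentially the standard proof found in those references. The forward direction via the identity $p\star\sigma=\sum_i w_i\,p(\zeta_i)\,\mathbf{e}_{\zeta_i}$, linear independence of evaluations at distinct points, and interpolation polynomials $\ell_i$ is exactly the classical argument, and your converse via $\Ac_{\sigma}=R/\ker H_{\sigma}\cong\CC^r$ (Chinese Remainder Theorem for the radical ideal of $r$ distinct points), lifting the idempotents $u_i$ and setting $w_i=\langle\sigma\mid u_i\rangle$, is the standard structural step; the verification $\langle\sigma\mid p\rangle=\sum_i w_i p(\zeta_i)$ by pairing $p-\sum_i p(\zeta_i)u_i\in\ker H_{\sigma}$ with $q=1$ is sound. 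Two small remarks: the statement implicitly assumes the $\zeta_i$ distinct (otherwise one merges coinciding points), which you use and should state; and the nonvanishing of the weights follows even more directly from the kernel hypothesis alone --- if $w_i=0$ then $u_i\in\ker H_{\sigma}$ while $u_i(\zeta_i)=1\neq 0$, contradicting $\ker H_{\sigma}=I(\{\zeta_1,\ldots,\zeta_r\})$ --- though your rank-based contradiction is equally valid. Your observation that $\rank H_{\sigma}=\dim\Ac_{\sigma}$ because the image of $H_{\sigma}$ is isomorphic to $R/\ker H_{\sigma}$ is the right way to tie the rank condition to the quotient algebra.
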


This shows that in order to find the points $\zeta_i$ of a cubature formula,
it is sufficient to compute the polynomials $p \in R$ such that $\forall q \in
R$, $\langle \sigma |p q \rangle = 0$ , and to determine their common zeroes.
In section \ref{sec:root}  we  describe a direct way to recover the points $\zeta_i$, and
the weights $\omega_i${\textbf{}} from suboperators of $H_{\sigma}$.

In the case of cubature formulae with real points and positive weights, we already
have the following stronger result (see {\cite{LaurentSurvey09,lasserre:hal-00651759}}):

\begin{proposition}
  \label{prop:2}Let {\em{}}$\sigma \in R^{\ast}$.
  \[ \sigma = \sum_{i = 1}^r w_i \hspace{0.17em} \mathbf{e}_{\zeta_i} \]
  with $w_i > 0$, $\zeta_i \in \RR^n$ iff $\rank H_{\sigma} = r$ and
  $H_{\sigma} \succcurlyeq 0$.
\end{proposition}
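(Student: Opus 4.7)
The direction $(\Rightarrow)$ is a short computation. Given $\sigma = \sum_{i=1}^r w_i\, \eb_{\zeta_i}$ with $w_i > 0$ and distinct $\zeta_i \in \RR^n$, one has $\langle \sigma | p^2 \rangle = \sum_{i=1}^r w_i\, p(\zeta_i)^2 \geq 0$ for every $p \in R$, so $H_\sigma \succcurlyeq 0$; the rank equality $\rank H_\sigma = r$ is provided by the easy direction of the previous proposition.

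For the converse, my plan is to exploit the spectral theorem on the finite-dimensional quotient $\Ac_\sigma = R / \ker H_\sigma$. The assumption $\rank H_\sigma = r$ gives $\dim_{\RR} \Ac_\sigma = r$, and the hypothesis $H_\sigma \succcurlyeq 0$ combined with the definition of $\ker H_\sigma$ turns
$$\langle \bar p, \bar q \rangle_\sigma \;:=\; \langle \sigma | p q \rangle$$
into a genuine positive definite Euclidean inner product on $\Ac_\sigma$ (vanishing of $\langle \bar p, \bar p \rangle_\sigma$ forces $p \star \sigma = 0$ by the Cauchy--Schwarz inequality for the PSD form). The multiplication operators $M_{x_j} : \Ac_\sigma \to \Ac_\sigma$, $\bar p \mapsto \overline{x_j p}$, are well defined since $\ker H_\sigma$ is an ideal, they commute pairwise, and they are self-adjoint with respect to $\langle \cdot, \cdot \rangle_\sigma$ since $\langle M_{x_j} \bar p, \bar q \rangle_\sigma = \langle \sigma | x_j p q \rangle = \langle \bar p, M_{x_j} \bar q \rangle_\sigma$.

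Invoking the spectral theorem for commuting self-adjoint operators on a Euclidean space then yields an orthonormal basis $e_1, \ldots, e_r$ of $\Ac_\sigma$ and real scalars $\zeta_{k,j}$ with $M_{x_j} e_k = \zeta_{k,j}\, e_k$. Setting $\zeta_k := (\zeta_{k,1}, \ldots, \zeta_{k,n}) \in \RR^n$, one has $M_p e_k = p(\zeta_k)\, e_k$ for every $p \in R$. Writing $\bar 1 = \sum_{k=1}^r c_k\, e_k$ and using $\langle \sigma | p \rangle = \langle M_p \bar 1, \bar 1 \rangle_\sigma$ together with orthonormality of the $e_k$ gives
$$\langle \sigma | p \rangle \;=\; \sum_{k=1}^r c_k^2\, p(\zeta_k),$$
which exhibits $\sigma = \sum_k c_k^2\, \eb_{\zeta_k}$ with real points and nonnegative weights $w_k := c_k^2$.

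To close the argument I would observe that the $\zeta_k$ must be pairwise distinct and the $c_k$ all nonzero: otherwise, after collecting equal atoms and discarding zero ones, $\sigma$ would be a sum of strictly fewer than $r$ evaluations, which by the easy direction of the previous proposition would force $\rank H_\sigma < r$, a contradiction. The main conceptual point of the proof is the commuting self-adjointness of the $M_{x_j}$, which is exactly what the spectral theorem packages into real common eigenvalues. A more algebraic alternative would be to show directly that $\ker H_\sigma$ is a real radical ideal, using $\langle \sigma | \sum_i p_i^2 \rangle = 0 \Rightarrow p_i \in \ker H_\sigma$, and then to apply the previous proposition, but in finite dimension the spectral route is cleaner and gives the positivity of the weights at no extra cost.
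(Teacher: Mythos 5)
Your argument is correct. Note, however, that the paper does not prove this proposition at all: it is quoted from the literature (Laurent's survey and Lasserre et al.), so there is no internal proof to compare with. Your forward direction is the standard computation, and your converse is a clean, self-contained route: positive semidefiniteness plus the Cauchy--Schwarz inequality makes $\langle\cdot,\cdot\rangle_{\sigma}$ a genuine inner product on the $r$-dimensional quotient $\Ac_{\sigma}$, the multiplication operators $M_{x_j}$ are commuting and self-adjoint for it, and simultaneous orthogonal diagonalization produces real common eigenvalues (the points) and weights $c_k^{2}\geq 0$ from the expansion of $\bar 1$; your final step, ruling out repeated atoms or vanishing weights by the rank bound from the preceding proposition, correctly pins down exactly $r$ distinct points with strictly positive weights. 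The references the paper cites (and the paper's own Theorem 4.2, which treats the truncated case) take a slightly different route: they first identify $\ker H_{\sigma}$ as a real radical zero-dimensional ideal whose variety consists of $r$ real points, and then obtain positivity of the weights via the interpolation polynomials $u_i$ at those points, since $w_i$ is a positive multiple of $\langle\sigma\mid u_i^{2}\rangle>0$. The two arguments are essentially dual: your spectral-theorem version gets realness of the points and nonnegativity of the weights in one stroke and is arguably more elementary, while the ideal-theoretic version generalizes more readily to the truncated/flat-extension setting that the rest of the paper actually uses.
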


A linear form $\sigma = \sum_{i = 1}^r w_i \hspace{0.17em}
\mathbf{e}_{\zeta_i}$ with $w_i > 0$, $\zeta_i \in \RR^n$ is called a
$r$-atomic measure since it coincides with the weighted sum of the $r$ Dirac
measures at the points $\zeta_i$.

Therefore, the problem of constructing a cubature formula $\sigma$ for $I$
exact on $V \subset R$ can be reformulated as follows: Construct a linear form
$\sigma \in R^{\ast}$ such that
\begin{itemize}
  \item $\rank H_{\sigma} = r < \infty$ and $H_{\sigma} \succcurlyeq 0$.

  \item $\forall v \in V$, $I [v] = \langle \sigma |  \nobracket v \rangle$.
\end{itemize}
The rank $r$ of $H_{\sigma}${\textbf{}} is  given by the number of points of the
cubature formula, which is expected to be small or even minimal.

The following result states that a cubature formula with $\dim (V)$ points,
always exist.

\begin{theorem}
  {\cite{Tchakaloff57,BaTei06}} If a sequence
  $(\sigma_{\alpha})_{\alpha \in \mathbb{N}^n, | \alpha | \leqslant t}$ is
  the truncated moment sequence of a measure ${mu}$ (i.e.
  $\sigma_{\alpha} = \int \mathbf{x}^{\alpha} d{mu}$ for $| \alpha |
  \leqslant t$) then it can also be represented by an $r$-atomic measure: for
  $| \alpha | \leqslant t$, $\sigma_{\alpha} = \sum_{i = 1}^r w_i
  \hspace{0.17em} \xi_i^{\alpha} $ where $r \leqslant s_t$, $w_i > 0$,
  $\zeta_i \in {supp} ({mu})$.
\end{theorem}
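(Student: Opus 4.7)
The natural approach is via Carathéodory's theorem on convex cones. The plan is to realize the truncated moment vector as a point in a finite-dimensional convex cone generated by evaluations of monomials at points of $\operatorname{supp}(\mu)$, and then invoke Carathéodory to extract a representation using at most $s_t = \dim R_t$ generators.

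First, I would introduce the \emph{moment curve} map
\[
\phi : \RR^n \to \RR^{s_t}, \qquad \phi(\xb) = (\xb^{\alpha})_{|\alpha|\leqslant t},
\]
so that the hypothesis $\sigma_{\alpha} = \int \xb^{\alpha}\, d\mu$ for $|\alpha|\leqslant t$ can be rewritten compactly as the single vector equation $\mathbf{s} := (\sigma_{\alpha})_{|\alpha|\leqslant t} = \int_{\operatorname{supp}(\mu)} \phi(\xb)\, d\mu(\xb)$ in $\RR^{s_t}$. Viewed this way, $\mathbf{s}$ is a limit of finite nonnegative combinations of vectors $\phi(\xb)$ with $\xb\in\operatorname{supp}(\mu)$, so it belongs to the closed convex cone $C$ generated by $\phi(\operatorname{supp}(\mu))\subset\RR^{s_t}$.

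Next I would apply Carathéodory's theorem for convex cones in $\RR^{s_t}$: every element of the convex conical hull of a subset of $\RR^{s_t}$ is a nonnegative linear combination of at most $s_t$ of its generators. Thus, assuming $\mathbf{s}$ lies in the \emph{conical hull} (not merely its closure), we get points $\zeta_1,\ldots,\zeta_r \in \operatorname{supp}(\mu)$ with $r\leqslant s_t$ and weights $w_i > 0$ such that $\mathbf{s} = \sum_{i=1}^r w_i\, \phi(\zeta_i)$. Reading this coordinate by coordinate recovers exactly $\sigma_{\alpha} = \sum_{i=1}^r w_i\, \zeta_i^{\alpha}$ for all $|\alpha|\leqslant t$, which is the desired $r$-atomic representation.

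The main obstacle, and the only nontrivial point, is closing the gap between the \emph{closed} conical hull $C$ and the conical hull itself: a priori, $\mathbf{s}$ is only known to be an integral limit of finite combinations. In the classical Tchakaloff setting ($\operatorname{supp}(\mu)$ compact), $\phi(\operatorname{supp}(\mu))$ is compact and the conical hull of a compact set not containing the origin in its convex hull is automatically closed, so the argument closes immediately. For the general Bayer--Teichmann extension, one has to argue more carefully: truncate by a compact exhaustion $K_m \uparrow \operatorname{supp}(\mu)$, apply the compact case to the restricted measure $\mu|_{K_m}$ to get representations of bounded length $r\leqslant s_t$, and then extract a convergent subsequence of atoms and weights (using that the total mass is controlled by the $0$-th moment). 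This compactness/extraction step is where the real work lies; the Carathéodory reduction itself is essentially immediate once the cone framework is set up.
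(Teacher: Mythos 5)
First, note that the paper does not prove this statement: it is quoted from Tchakaloff and Bayer--Teichmann, with a pointer to Theorem 5.9 of the Laurent survey, so your proposal can only be compared with those standard proofs. Your overall framework (push the truncated moment vector into $\RR^{s_t}$ via the monomial map $\phi$, observe it lies in the convex cone generated by $\phi(\operatorname{supp}\mu)$, and apply the conical Carath\'eodory theorem, noting that everything lives in the affine slice where the $\alpha=0$ coordinate equals the mass) is indeed the right skeleton, and your treatment of the compact case is fine.

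The gap is in the non-compact case, and it is exactly the step you flag as ``where the real work lies'': the compact-exhaustion-plus-subsequence argument does not go through as described. Bounded total mass controls the weights but not the locations of the atoms; along the exhaustion $K_m$, some atoms $\zeta_i^{(m)}$ may escape to infinity with weights $w_i^{(m)}\to 0$ in such a way that $w_i^{(m)}(\zeta_i^{(m)})^{\alpha}$ stays bounded away from zero for $|\alpha|=t$. Then the limit of your atomic representations is not an atomic measure matching $\sigma$: the surviving (bounded) atoms account for $\sigma$ only up to a spurious functional supported in top degree. This degeneration is not hypothetical --- it is precisely the phenomenon described in Theorem \ref{thm:flat:trunc} of this paper, where limits of evaluations contribute terms of the form $w_i\,\mathbf{e}_{\zeta_i}\circ\pi_{2k}$. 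The known proofs avoid taking limits of atomic representations altogether: one shows directly that the moment vector $\mathbf{s}$ lies in the cone itself, not merely its closure. For instance, if a linear functional on $\RR^{s_t}$, i.e.\ a polynomial $p$ of degree $\leqslant t$, is nonnegative on $\phi(\operatorname{supp}\mu)$ and vanishes at $\mathbf{s}$, then $\int p\,d\mu=0$ with $p\geqslant 0$ on $\operatorname{supp}\mu$ forces $p\equiv 0$ on $\operatorname{supp}\mu$; hence $\mathbf{s}$ lies in the relative interior of the closed cone, which coincides with the relative interior of the cone generated by $\phi(\operatorname{supp}\mu)$, and Carath\'eodory then applies verbatim. (Equivalently, one can use the lemma that the barycenter of a finite measure on $\RR^{s_t}$ with finite first moments lies in the convex hull of its support, proved by induction on dimension via supporting hyperplanes, combined with the observation that $\phi$ is a proper map, so $\phi(\operatorname{supp}\mu)$ is closed.) Replacing your extraction step by such an argument repairs the proof; as written, the extraction step would fail.
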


This result can be generalized to any set of linearly independent polynomials
$v_1, \ldots, v_r \in R$ (see the proof in {\textbf{}}{\cite{BaTei06}} or
Theorem 5.9 in {\cite{LaurentSurvey09}}). We deduce that the
cubature problem always has a solution with $\dim (V)$ or less points. \

\begin{definition}
  Let $r_c (I)$ be the maximum rank of the bilinear form $H_I^{W, W'} : (w,
  w') \in W \times W' \mapsto I [w w']$ where $W, W' \subset V$ are such that
  $\forall w \in W, \forall w' \in W'$, $w \hspace{0.17em} w' \in V$. It is
  called the Catalecticant rank of $I$.
\end{definition}

\begin{proposition}
  \label{prop:minr}Any cubature formula for $I$ exact on $V$ involves at least
  $r_c (I)$ points.
\end{proposition}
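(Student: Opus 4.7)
The plan is to compare the truncated Hankel operator of any valid cubature $\sigma$ with that of $I$ on the admissible pairs of subspaces $(W,W')$, and to exploit the fact that a finite atomic $\sigma$ factors its Hankel operator through $\mathbb{C}^r$, so no truncation can have rank exceeding $r$.

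First, let $\sigma = \sum_{i=1}^{r} w_i\, \mathbf{e}_{\zeta_i}$ be any cubature formula for $I$ exact on $V$, and let $W, W' \subset V$ be subspaces with $W \cdot W' \subset V$. For every $w \in W$ and $w' \in W'$ the product $ww'$ lies in $V$, so by hypothesis
\[
H_\sigma^{W,W'}(w,w') \;=\; \langle \sigma \,|\, ww' \rangle \;=\; I[ww'] \;=\; H_I^{W,W'}(w,w').
\]
Hence the truncated bilinear forms coincide, and in particular $\rank H_I^{W,W'} = \rank H_\sigma^{W,W'}$.

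Next I would bound the rank of $H_\sigma^{W,W'}$ using the atomic structure of $\sigma$. For any $w \in W$, $w' \in W'$,
\[
H_\sigma^{W,W'}(w,w') \;=\; \sum_{i=1}^{r} w_i\, w(\zeta_i)\, w'(\zeta_i).
\]
Introducing the evaluation maps $E_W : w \in W \mapsto (w(\zeta_i))_{i=1}^r \in \mathbb{C}^r$ and $E_{W'}$ defined analogously, together with the diagonal weight matrix $D = \mathrm{diag}(w_1,\ldots,w_r)$, this reads $H_\sigma^{W,W'} = E_{W'}^{T} D\, E_W$. This factorization goes through $\mathbb{C}^r$, so $\rank H_\sigma^{W,W'} \le r$.

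Combining the two steps, $\rank H_I^{W,W'} \le r$ for every admissible pair $(W,W')$. Taking the supremum over such pairs gives $r_c(I) \le r$, which is the claim. There is no real obstacle here: the argument is a one-line factorization once one observes that the condition $W \cdot W' \subset V$ is precisely what forces $H_\sigma^{W,W'}$ and $H_I^{W,W'}$ to agree, so the only thing to keep track of is the admissibility condition in the definition of $r_c(I)$.
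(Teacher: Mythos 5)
Your proof is correct and takes essentially the same approach as the paper: both arguments rest on the observation that the condition $W \cdot W' \subset V$ forces $H_I^{W,W'}$ to coincide with $H_\sigma^{W,W'}$, and then bound the rank of the latter by $r$. The only cosmetic difference is that you obtain this bound by explicitly factoring $H_\sigma^{W,W'} = E_{W'}^{T} D\, E_W$ through $\mathbb{C}^r$, whereas the paper invokes $\rank H_\sigma = r$ and the fact that restricting the bilinear form $H_\sigma$ to $W \times W'$ cannot increase its rank.
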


\begin{proof}
  Suppose that $\sigma$ is a cubature formula for $I$ exact on $V$ with $r$
  points. Let $W, W' \subset V$ be vector spaces are such that $\forall w \in
  W, \forall w' \in W'$, $w \hspace{0.17em} w' \in V$. Since $H_I^{W, W'}$
  coincides with $H_{\sigma}^{W, W'}$, which is the restriction of the
  bilinear form $H_{\sigma}$ to $W \times W'$, we deduce that $r = \rank
  H_{\sigma} \geqslant \rank H_{\sigma}^{W, W'} = \rank  (H_I^{W, W'})$. Thus
  $r \geqslant r_c (I) .$
\end{proof}

\begin{corollary}
  Let $W \subset V$ such that $\forall w, w' \in W, w \hspace{0.17em} w' \in
  V$. Then any cubature formula of $I$ exact on $V$ involves at least $\dim
  (W)$ points.
\end{corollary}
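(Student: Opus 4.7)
The plan is to apply Proposition~\ref{prop:minr} with $W'=W$ and then verify that the truncated Hankel form $H_I^{W,W}$ has full rank, so that $r_c(I) \geq \dim(W)$.

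First I would observe that the hypothesis $\forall w, w' \in W$, $ww' \in V$ makes the pair $(W,W)$ admissible for the definition of $r_c(I)$. By the definition of the Catalecticant rank, $r_c(I) \geq \rank H_I^{W,W}$, so Proposition~\ref{prop:minr} already gives that any cubature formula of $I$ exact on $V$ uses at least $\rank H_I^{W,W}$ points. It therefore suffices to show $\rank H_I^{W,W} = \dim(W)$, i.e.\ that $H_I^{W,W}$ is non-degenerate.

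The key input is the positivity of the weight function $w$ on $\Omega$ from the setup of the problem. For any nonzero $p \in W \subset R = \RR[\xb]$,
\[
H_I^{W,W}(p,p) = I[p^2] = \int_{\Omega} w(\xb)\, p(\xb)^2\, d\xb \geq 0,
\]
and the integrand is a non-negative continuous function. Equality to zero would force $p$ to vanish on the support of the measure $w\, d\xb$; as $\Omega$ has positive Lebesgue measure and $w>0$ on $\Omega$, this support has positive measure, and a polynomial vanishing on a set of positive measure is identically zero. Hence $H_I^{W,W}$ is positive definite on $W$, so $\rank H_I^{W,W} = \dim(W)$.

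Combining this with Proposition~\ref{prop:minr} yields the claim: any cubature formula of $I$ exact on $V$ uses at least $r_c(I) \geq \dim(W)$ points. No step is really an obstacle; the only subtlety is the implicit appeal to the positivity of $w$ on $\Omega$ (and to $\Omega$ having positive measure), which is part of the standing hypothesis on $I$ stated at the beginning of the paper.
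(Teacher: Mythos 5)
Your proof is correct and follows essentially the same route as the paper: positive definiteness of the quadratic form $H_I^{W,W}$ (since $p^2\in V$ and $I[p^2]=0$ forces $p=0$), followed by Proposition~\ref{prop:minr} to conclude $r_c(I)\geqslant \dim(W)$. The only difference is that you spell out the implicit step --- positivity of the weight $w$ on $\Omega$ and the fact that a polynomial vanishing on a set of positive measure is zero --- which the paper leaves tacit.
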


\begin{proof}
  As we have $\forall p \in W$, $p^2 \in V$ so that $I (p^2) = 0$ implies $p
  = 0$. Therefore the quadratic form $H_I^{W, W} : (p, q) \in W \times W
  \rightarrow I [p q]$ is positive definite of rank $\dim (W)$. By Proposition
  \ref{prop:minr}, a cubature formula of $I$ exact on $V$ involves at least
  $r_c (I) \geqslant \dim (W)$ points.
\end{proof}

In particular, if $V = R_d$ any cubature formula of $I$ exact on $V$ involve
at least $\dim R_{\lfloor \frac{d}{2} \rfloor} = \binom{\lfloor \frac{d}{2}
\rfloor + n}{n}$ points.

In {\cite{Moller79}}, this lower bound is improved for cubature problems in
two variables.

\section{Flat extensions}

In order to reduce the extension problem to a finite-dimensional problem, we
consider hereafter only truncated Hankel operators. Given two subspaces $W, W'$ of
$R$ and a linear form $\sigma$ defined on $W \cdummy W'$ (i.e. $\sigma \in
\langle W \cdummy W' \rangle^{\ast}$), we define
\begin{eqnarray*}
  H_{\sigma}^{W, W'} : W \times W' & \rightarrow & \RR\\
  (w, w') & \mapsto & \langle \sigma |w w' \rangle .
\end{eqnarray*}
If $\wb$ (resp. $\wb'$) is a basis of $W$ (resp. $W'$), then we will also
denote $H_{\sigma}^{\wb, \wb'} \assign H_{\sigma}^{W, W'}$. The matrix of
$H_{\sigma}^{\wb, \wb'}$ in the basis \ $\wb = \{ w_1, \ldots, w_s \}
\nocomma, \wb' = \{ w'_1, \ldots, w'_{s'} \}$ is $[\langle \sigma |
\nobracket w_i w_j \rangle]_{1 \leqslant i \leqslant s, 1 \leqslant j
\leqslant s'}$.

\begin{definition}
  Let $W \subset V$, $W' \subset V'$ be subvector spaces of $R$
  and $\sigma \in \langle V \cdummy V' \rangle^{\ast}$. We say that
  $H_{\sigma}^{V, V'}$ is a \textit{flat extension} of $H_{\sigma}^{W, W'}$
  if $\rank H_{\sigma}^{V, V'} = \rank \hspace{0.17em} H_{\sigma}^{W,
  W'}$.
\end{definition}

A set $B$ of monomials of $R$ is \textit{connected to $1$} if it contains
$1$ and if for any $m \neq 1 \in B$, there exists $1 \le i \le n$ and $m' \in
B$ such that $m = x_i m'$.

As a quotient $R / \ker \hspace{0.17em} \tilde{Q}$ has a monomial basis
connected to $1$, so in the first step we take for $\wb$, $\wb'$, monomial sets that are
connected to $1$ .

For a set $B$ of monomials in $R$, let us define $B^+ = B \cup x_1 B \cup
\cdots \cup x_n, B$ and $\partial B = B^+ \setminus B$.

The next theorem gives a characterization of flat extensions for Hankel
operators defined on monomial sets connected to $1$. It is a generalized form
of the Curto-Fialkow theorem {\cite{CF00}}.

\begin{theorem}
  {\cite{ML08,BracCMT09:laa,BBCMMultiSym13}}.\label{thm:flat:extension} Let $B
  \subset C, B' \subset C'$ be sets of \ monomials connected to $1$ such that
  $| B | = | B' | = r$ and $C \cdot C'$ contains $B^+ \cdot B'^+$. If $\sigma
  \in \langle C \cdot C' \rangle^{\ast}$ is such that $\rank \hspace{0.17em}
  H_{\sigma}^{B, B'} = \rank \hspace{0.17em} H_{\sigma}^{C, C'} = r$, then
  $\hspace{0.17em} H_{\sigma}^{C, C'}$ has a unique flat extension
  $H_{\tilde{\sigma}}$ for some $\tilde{\sigma} \in R^{\ast}$. Moreover, we
  have $\ker H_{\tilde{\sigma}} = (\ker H_{\sigma}^{C, C'})$ and $R = \langle
  B \rangle \oplus \ker H_{\tilde{\sigma}} = \langle B' \rangle \oplus \ker
  H_{\tilde{\sigma}}$. In the case where $B' = B$, if $H_{\sigma}^{B, B}
  \succcurlyeq 0$, then $H_{\tilde{\sigma}} \succcurlyeq 0$.
\end{theorem}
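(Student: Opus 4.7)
The plan is to construct, from the finite Hankel data, a commutative $R$-algebra structure on $\langle B\rangle$ of dimension $r$ and to define $\tilde\sigma$ by pulling $\sigma$ back along the resulting quotient $R\twoheadrightarrow\langle B\rangle$. Since $|B|=|B'|=r=\rank H_{\sigma}^{C,C'}$, the principal block $[H_{\sigma}^{B,B'}]=(\langle\sigma | bb'\rangle)_{b\in B,b'\in B'}$ has rank $r$ and is therefore invertible. The border hypothesis $B^+\cdot B'^+\subset C\cdot C'$ guarantees that $\langle\sigma | m\,m'\rangle$ is defined for every $m\in B^+$ and $m'\in B'^+$. Invertibility of $[H_{\sigma}^{B,B'}]$ then lets me solve uniquely for coefficients $(\lambda_{m,b})_{b\in B}$ satisfying
\[
\langle\sigma | m\,b'\rangle \;=\; \sum_{b\in B}\lambda_{m,b}\,\langle\sigma | b\,b'\rangle \qquad (b'\in B').
\]
Applied to $m=x_{i}b_{0}$ with $b_{0}\in B$, this defines linear operators $M_{i}:\langle B\rangle\to\langle B\rangle$ sending $b_{0}$ to $\sum_{b}\lambda_{x_{i}b_{0},b}\,b$; the analogous column construction produces operators $M'_{i}$ on $\langle B'\rangle$.

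The heart of the argument, and the main obstacle, is to establish the commutativity $M_{i}M_{j}=M_{j}M_{i}$. I test this identity against the nondegenerate pairing $[H_{\sigma}^{B,B'}]$: for every $\tilde b\in B'$ I expand $\langle\sigma | M_{i}M_{j}b_{0}\cdot\tilde b\rangle$ and $\langle\sigma | M_{j}M_{i}b_{0}\cdot\tilde b\rangle$ in terms of $\sigma$ evaluated at the product $x_{i}x_{j}b_{0}\tilde b$, and then compare the two sides via the Hankel symmetry $\sigma(pq)=\sigma(qp)$. The comparison only makes sense because every intermediate product lies in $B^{+}\cdot B'^{+}\subset C\cdot C'$ and is thus in the domain of $\sigma$; this is precisely what the double-border hypothesis buys, and without it the chain of substitutions breaks. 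A parallel verification shows the $M_{i}$ are compatible with the $M'_{i}$ under the transposition that identifies rows and columns.

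Once commutativity is in hand, the $M_{i}$ endow $\langle B\rangle$ with the structure of a commutative unital $R$-algebra, the unit being $1\in B$. The map $\pi:R\to\langle B\rangle$ defined by letting $p$ act on $1$ is a surjective $R$-algebra homomorphism whose kernel $I\subset R$ is an ideal of codimension $r$, yielding $R=\langle B\rangle\oplus I$; the analogous decomposition with $B'$ comes from the mirror construction. I set
\[
\langle\tilde\sigma | p\rangle \;:=\; \langle\sigma |\pi(p)\rangle, \qquad p\in R,
\]
the right-hand side being meaningful since $\pi(p)\in\langle B\rangle\subset\langle C\cdot C'\rangle$ (using $1\in B'\subset C'$). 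By construction $I\subset\ker H_{\tilde\sigma}$ while $\dim(R/I)=r$ gives $\rank H_{\tilde\sigma}\leqslant r$. Equality, and the fact that $\tilde\sigma$ really extends $\sigma$ on $C\cdot C'$, both follow from the check $\langle\sigma | c\,c'\rangle=\langle\sigma |\pi(c)\,c'\rangle$ for $c\in C$ and $c'\in C'$, which is exactly the content of $\rank H_{\sigma}^{C,C'}=\rank[H_{\sigma}^{B,B'}]$ combined with the defining equations for the $\lambda_{m,b}$. Comparing generators then gives $\ker H_{\tilde\sigma}=(\ker H_{\sigma}^{C,C'})$.

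Uniqueness is immediate: any flat extension must satisfy the same equations on $B^{+}\cdot B'$, hence produce the same $\lambda_{m,b}$, the same $M_{i}$, the same quotient $\pi$ and the same $\tilde\sigma$. For the positivity clause, when $B=B'$ the formula $H_{\tilde\sigma}(p,q)=H_{\sigma}^{B,B}(\pi(p),\pi(q))$ exhibits $H_{\tilde\sigma}$ as the pull-back along $\pi\otimes\pi$ of the positive semidefinite form $H_{\sigma}^{B,B}$, whence $H_{\tilde\sigma}\succcurlyeq 0$.
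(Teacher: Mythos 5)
The paper does not prove this theorem; it imports it from the cited references, so your attempt can only be judged against the known proofs (Laurent--Mourrain and the symmetric/multi-symmetric tensor decomposition papers), whose overall strategy you correctly reproduce: build multiplication operators on $\langle B\rangle$, prove they commute, form the quotient algebra, and pull $\sigma$ back through it. The genuine gap is at the step you yourself call the heart. Your operators are defined only by the relations $\langle \sigma \mid (x_j b_0)\, b'\rangle=\langle \sigma \mid (M_j b_0)\, b'\rangle$ for $b'\in B'$. To expand $\langle \sigma \mid (M_iM_jb_0)\,\tilde b\rangle$ into $\langle \sigma \mid x_ix_jb_0\tilde b\rangle$ you must use that same relation with $b'$ replaced by $x_i\tilde b\in \partial B'$, which is \emph{not} granted by the definition of $M_j$. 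What is needed is a kernel-extension lemma: the vector $x_jb_0-M_jb_0\in\langle B^+\rangle$, which annihilates the columns indexed by $B'$, annihilates all columns indexed by $B'^+$. This is precisely where the hypothesis $\rank H_{\sigma}^{C,C'}=\rank H_{\sigma}^{B,B'}=r$ must be invoked (it forces $\rank H_{\sigma}^{B^+,B'^+}=r$, so every $\partial B'$ column is a combination of the $B'$ columns). You instead justify the chain of substitutions solely by the fact that $\sigma$ is defined on $B^+\cdot B'^+$; that is a domain condition and does not yield the identities, so the commutativity argument as written does not go through. Once the kernel-extension lemma is stated and proved, your pairing argument against the invertible block does work, without induction.

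There are secondary omissions of the same kind. Connectedness to $1$ is never used in your argument, yet it is essential (it is what makes this the \emph{sparse} flat extension theorem): you need it, together with $M_i b=x_i b$ whenever $x_i b\in B$ (a consequence of the invertibility of $[H_{\sigma}^{B,B'}]$ that should be recorded), to conclude $\pi(b)=b$ for all $b\in B$; this is what makes $\pi$ surjective, gives $R=\langle B\rangle\oplus I$, and ensures that $H_{\tilde\sigma}$ restricted to $B\times B'$ coincides with $H_{\sigma}^{B,B'}$, so that $\rank H_{\tilde\sigma}$ is exactly $r$ rather than merely $\leqslant r$. Similarly, in the non-symmetric case $B\neq B'$ the compatibility of the $M_i$ with the column operators $M'_i$, and the verification that $\tilde\sigma$ agrees with $\sigma$ on all of $C\cdot C'$ (and hence the identification $\ker H_{\tilde\sigma}=(\ker H_{\sigma}^{C,C'})$), are asserted as ``parallel verifications'' but form a substantial part of the cited proofs and also use the connectedness of $B'$. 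The final positivity claim is fine once these points are established, since then $H_{\tilde\sigma}(p,q)=H_{\sigma}^{B,B}(\pi(p),\pi(q))$ is legitimate.
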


Based on this theorem, in order to find a flat extension of $H_{\sigma}^{B, B'}$,  it suffices
 to construct an extension $H_{\sigma}^{B^+, B'^+}$ of the same rank
$r$.

\begin{corollary}
  Let $V \subset R$ be a finite dimensional vector space. If there exist a set
  $B$ of \ monomials connected to $1$ such that $V \subset \langle B^+ \cdot
  B^+ \rangle$ and $\sigma \in \langle B^+ \cdot B^+ \rangle^{\ast}$ such that
  $\forall v \in V, \langle \sigma |v \rangle = I [v]$ and $\rank
  \hspace{0.17em} H_{\sigma}^{B, B} = \rank \hspace{0.17em} H_{\sigma}^{B^+,
  B^+} = | B | = r$, then there exists $w_i > 0$, $\zeta_i \in \RR^n$ such
  that $\forall v \in V,$
  \[ I [v] = \sum_{i = 1}^r w_i \hspace{0.17em} v (\xi_i) . \]
\end{corollary}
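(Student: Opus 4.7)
The plan is to invoke Theorem \ref{thm:flat:extension} with $C = C' = B^+$ and $B' = B$, then to extract an atomic decomposition by Proposition \ref{prop:2}, and finally to restrict back to $V$.

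First, I would verify the hypotheses of Theorem \ref{thm:flat:extension}. The set $B$ is connected to $1$ by assumption, the cardinalities match ($|B|=|B'|=r$), and $B^+ \cdot B^+$ trivially contains $B^+ \cdot B^+$. The rank equality $\rank H_{\sigma}^{B,B} = \rank H_{\sigma}^{B^+,B^+} = r$ is given. The theorem therefore produces a unique $\tilde{\sigma} \in R^{*}$ such that $H_{\tilde{\sigma}}$ is a flat extension of $H_{\sigma}^{B^+,B^+}$, with $\rank H_{\tilde{\sigma}} = r$ and $\tilde{\sigma}$ agreeing with $\sigma$ on $\langle B^+ \cdot B^+ \rangle$.

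Next, I would establish that $H_{\sigma}^{B,B} \succcurlyeq 0$, since the positivity clause of Theorem \ref{thm:flat:extension} then transfers this property to $H_{\tilde{\sigma}}$. For any $p = \sum_{b \in B} c_b\, b \in \langle B \rangle$, the square $p^2$ lies in $\langle B \cdot B \rangle \subset \langle B^+ \cdot B^+ \rangle$; assuming the natural companion hypothesis $\langle B \cdot B \rangle \subset V$ (so that $\sigma$ really does coincide with $I$ on $p^2$), we get $\langle \sigma \mid p^2 \rangle = I[p^2] = \int_{\Omega} w(\mathbf{x}) p(\mathbf{x})^2\, d\mathbf{x} \geq 0$ because $w$ is positive on $\Omega$. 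Hence $H_{\tilde{\sigma}} \succcurlyeq 0$.

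With $H_{\tilde{\sigma}}$ of rank $r$ and positive semidefinite, Proposition \ref{prop:2} applies and yields a decomposition $\tilde{\sigma} = \sum_{i=1}^{r} w_i\, \mathbf{e}_{\zeta_i}$ with $w_i > 0$ and $\zeta_i \in \RR^n$. For every $v \in V \subset \langle B^+ \cdot B^+ \rangle$, the tower of equalities $I[v] = \langle \sigma \mid v \rangle = \langle \tilde{\sigma} \mid v \rangle = \sum_{i=1}^{r} w_i\, v(\zeta_i)$ then delivers the desired cubature identity.

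The main obstacle is the positivity step: the corollary as stated forces $\sigma$ to agree with $I$ only on $V$, not on $\langle B \cdot B \rangle$, so to conclude $H_{\sigma}^{B,B} \succcurlyeq 0$ from positivity of $I$ one needs either the additional inclusion $\langle B \cdot B \rangle \subset V$ (very natural, and satisfied for example when $V = R_d$ and $B \subset R_{\lfloor d/2 \rfloor}$) or a direct positivity assumption on $H_{\sigma}^{B,B}$. Modulo this clarification, the argument is a clean two-line chaining of the flat extension theorem and Proposition \ref{prop:2}.
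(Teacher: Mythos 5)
Your argument is exactly the intended one: the paper gives no separate proof of this corollary, treating it as the immediate chaining of Theorem \ref{thm:flat:extension} (applied with $C=C'=B^{+}$ and $B'=B$) with Proposition \ref{prop:2}, followed by restriction to $V\subset\langle B^{+}\cdot B^{+}\rangle$, which is precisely what you do. Your caveat on the positivity step is legitimate: as printed, the hypotheses only force $\sigma$ to agree with $I$ on $V$, so $H_{\sigma}^{B,B}\succcurlyeq 0$ is tacitly assumed rather than implied; in the paper's use cases (e.g.\ $V=R_{2k}$ with $B$ the monomials of degree $\le k$, as in the example following Proposition \ref{prop:dec}) your companion inclusion $\langle B\cdot B\rangle\subset V$ holds, and then $\langle\sigma\mid p^{2}\rangle=I[p^{2}]\ge 0$ follows from positivity of the weight, exactly as you argue.
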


This characterization leads to equations which are at most of degree $2$ in a
set of variables related to unknown moments and relation coefficients as
described by the following proposition:

\begin{proposition}
  \label{prop:dec}Let $B$ and $B'$ be two sets of monomials of $R {of}
  {size}$r, connected to $1$ and $\sigma$ be a linear form on $\langle
  B^{' +} \cdot B^+ \rangle$. Then, $H^{B^+, B'^+}_{\sigma}$ admits a flat
  extension $H_{\tilde{\sigma}}$ such that $H_{\tilde{\sigma}}$ is of rank $r$
  and $B$ (resp. B') a basis of $R / \ker H_{\tilde{\sigma}}$ iff
  \begin{equation}
    \label{dec:H} [H_{\sigma}^{B^+, B'^+}] = \left( \begin{array}{cc}
      \mathbb{Q} & \mathbb{M}'\\
      \mathbb{M}^t & \mathbb{N}
    \end{array} \right),
  \end{equation}
  with $\mathbb{Q} = [H_{\sigma}^{B, B'}]$, $\mathbb{M}' = [H_{\sigma}^{B, \partial
  B'}]$, $\mathbb{M}^t = [H_{\sigma}^{\partial B, B'}]$, $\mathbb{N} =
  [H_{\sigma}^{\partial B, \partial B'}]$ is such that $\mathbb{Q}$ is invertible
  and
  \begin{equation}
    \label{eq:W} \mathbb{M} = \mathbb{Q}^t \hspace{0.17em} \mathbb{P}, \mathbb{M}' =
    \mathbb{Q} \hspace{0.17em} \mathbb{P}', \mathbb{N} = \mathbb{P}^t \hspace{0.17em}
    \mathbb{Q} \hspace{0.17em} \mathbb{P}',
  \end{equation}
  for some matrices $\mathbb{P} \in \CC^{B \times \partial B'}$, $\mathbb{P}' \in
  \CC^{B' \times \partial B}$.
\end{proposition}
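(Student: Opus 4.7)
The plan is to recognize that condition $(\ref{eq:W})$ is an encoding of the Schur complement vanishing, and then invoke Theorem \ref{thm:flat:extension} to promote the rank condition to an actual flat extension.

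For the direct implication, suppose $H_{\sigma}^{B^+,B'^+}$ admits a flat extension $H_{\tilde\sigma}$ of rank $r$ with $B, B'$ both bases of $R/\ker H_{\tilde\sigma}$. Then $\rank H_{\sigma}^{B^+,B'^+} = r$. Since $B$ (resp.\ $B'$) is a basis of the rank-$r$ quotient $\mathcal{A}_{\tilde\sigma}$, the restriction $H_{\sigma}^{B,B'} = \mathbb{Q}$ must itself be of rank $r$, hence invertible. Because the full block $H_{\sigma}^{B^+,B'^+}$ has the same rank $r$ as its top-left $r\times r$ minor $\mathbb{Q}$, each row indexed by a monomial in $\partial B$ is a linear combination of rows indexed by $B$: writing the combination coefficients as a matrix $\mathbb{P} \in \CC^{B\times \partial B}$ gives $\mathbb{M}^t = \mathbb{P}^t \mathbb{Q}$, i.e.\ $\mathbb{M}= \mathbb{Q}^t \mathbb{P}$. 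The symmetric column argument produces $\mathbb{P}' \in \CC^{B'\times \partial B'}$ with $\mathbb{M}' = \mathbb{Q}\mathbb{P}'$. Finally $\mathbb{N}$ is forced: the bottom-right block is obtained by combining the $\partial B$ rows on the top-right block, $\mathbb{N} = \mathbb{P}^t \mathbb{M}' = \mathbb{P}^t \mathbb{Q}\mathbb{P}'$.

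For the converse, assume $\mathbb{Q}$ invertible and $\mathbb{M}= \mathbb{Q}^t\mathbb{P}$, $\mathbb{M}' = \mathbb{Q}\mathbb{P}'$, $\mathbb{N} = \mathbb{P}^t \mathbb{Q}\mathbb{P}'$. Compute the Schur complement of $\mathbb{Q}$:
\[
\mathbb{N} - \mathbb{M}^t \mathbb{Q}^{-1}\mathbb{M}' \;=\; \mathbb{P}^t\mathbb{Q}\mathbb{P}' - (\mathbb{P}^t\mathbb{Q})\,\mathbb{Q}^{-1}(\mathbb{Q}\mathbb{P}') \;=\; 0,
\]
so the standard block-rank identity $\rank\!\begin{pmatrix}\mathbb{Q}&\mathbb{M}'\\ \mathbb{M}^t &\mathbb{N}\end{pmatrix} = \rank\mathbb{Q} + \rank(\mathbb{N}-\mathbb{M}^t\mathbb{Q}^{-1}\mathbb{M}')$ yields $\rank H_{\sigma}^{B^+,B'^+} = \rank\mathbb{Q} = r = \rank H_{\sigma}^{B,B'}$. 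Since $B, B'$ are monomial sets connected to $1$, of size $r$, contained in $B^+, B'^+$, and $B^+\cdot B'^+$ is trivially contained in $(B^+)\cdot(B'^+)$, Theorem \ref{thm:flat:extension} applies: $H_{\sigma}^{B^+,B'^+}$ admits a unique flat extension $H_{\tilde\sigma} \in R^*$, of rank $r$, and $R = \langle B\rangle \oplus \ker H_{\tilde\sigma} = \langle B'\rangle \oplus \ker H_{\tilde\sigma}$, so both $B$ and $B'$ are bases of $R/\ker H_{\tilde\sigma}$.

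The argument is essentially linear algebra (Schur complement) together with one invocation of the flat-extension theorem. The only subtle point is to make sure the transposes line up correctly: $\mathbb{Q}$ is indexed by $(B,B')$, so its transpose appears when expressing the $(\partial B, B')$ block as row-combinations, which is why the condition $\mathbb{M}=\mathbb{Q}^t\mathbb{P}$ rather than $\mathbb{Q}\mathbb{P}$ is the right one. Once this bookkeeping is in place, the computation of the Schur complement $\mathbb{P}^t\mathbb{Q}\mathbb{P}'-\mathbb{P}^t\mathbb{Q}\mathbb{P}' = 0$ is immediate and both directions follow.
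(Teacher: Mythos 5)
Your proof is correct and takes essentially the same route as the paper's: the converse direction is the paper's observation that the block matrix factors through $\mathbb{Q}$ (your Schur-complement computation is just a rephrasing of that rank identity) followed by one invocation of Theorem \ref{thm:flat:extension}, and your row/column-combination argument for the forward direction is the same linear algebra the paper expresses via the kernel being the image of $\left(\begin{smallmatrix}-\mathbb{P}'\\ \mathbb{I}\end{smallmatrix}\right)$. The only difference is cosmetic bookkeeping, including your (correct) reading of the index sets of $\mathbb{P}$ and $\mathbb{P}'$ as $B\times\partial B$ and $B'\times\partial B'$.
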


\begin{proof}
  If we have $\mathbb{M} = \mathbb{Q}^t \hspace{0.17em} \mathbb{P}, \mathbb{M}' = \mathbb{Q}
  \hspace{0.17em} \mathbb{P}', \mathbb{N} = \mathbb{P}^t \hspace{0.17em} \mathbb{Q}
  \hspace{0.17em} \mathbb{P}'$, then
  \[ [H_{\sigma}^{B^+, B'^+}] = \left( \begin{array}{cc}
       \mathbb{Q} & \mathbb{Q} \hspace{0.17em} \mathbb{P}'\\
       \mathbb{P}^t \mathbb{Q} & \mathbb{P}^t \hspace{0.17em} \mathbb{Q} \hspace{0.17em}
       \mathbb{P}'
     \end{array} \right) \]
  has clearly the same rank as $\mathbb{Q}$=$[H_{\sigma}^{B, B'}]$. According to
  Theorem \ref{thm:flat:extension}, $H_{\sigma}^{B^+, B'^+}$ admits a flat
  extension $H_{\tilde{\sigma}}$ with $\tilde{\sigma} \in R^{\ast}$ such that
  $B$ and $B'$ are bases of $\mathcal{A}_{\tilde{\sigma}} = R / \ker
  H_{\tilde{\sigma}}$.

  Conversely, if $H_{\tilde{\sigma}}$ is a flat extension of \
  $H_{\sigma}^{B^+, B'^+}$ with $B$ and $B'$ bases of
  $\mathcal{A}_{\tilde{\sigma}} = R / \ker H_{\tilde{\sigma}}$, then
  $[H_{\tilde{\sigma}}^{B, B'}] = [H_{\sigma}^{B, B'}] = \mathbb{Q}$ is
  invertible and of size $r = | B | = | B' |$. As $H_{\tilde{\sigma}}$ is of
  rank $r$, $H_{\sigma}^{B^+, B'^+}$ is also of rank $r$. Thus, there exists
  $\mathbb{P}' \in \CC^{B' \times \partial B}$ ($\mathbb{P}' = \mathbb{Q}^{- 1}
  \mathbb{M}'$) such that $\mathbb{M}' = \mathbb{Q} \hspace{0.17em} \mathbb{P}'$.
  Similarly, there exists $\mathbb{P} \in \CC^{B \times \partial B'}$ such that
  $\mathbb{M} = \mathbb{Q}^t \hspace{0.17em} \mathbb{P}$. Thus, the kernel of
  $[H_{\sigma}^{B^+, B'^+}]$ (resp. $[H_{\sigma}^{B^{' +},
  B^+}]$=$[H_{\sigma}^{B^+, B'^+}]^t$) is the image of \ $\left(
  \begin{array}{c}
    - \mathbb{P}'\\
    \mathbb{I}
  \end{array} \right)$ (resp. $\left( \begin{array}{c}
    - \mathbb{P}\\
    \mathbb{I}
  \end{array} \right)$).
We deduce that
 $\mathbb{N} = \mathbb{M}^t \hspace{0.17em} \mathbb{P}' = \mathbb{P}^t \hspace{0.17em} \mathbb{Q} \hspace{0.17em} \mathbb{P}'$.
\end{proof}

\begin{remark}
  A basis of the kernel of $H_{\sigma}^{B^+, B'^+}$ is given by the columns of
  $\left( \begin{array}{c}
    - \mathbb{P}'\\
    \mathbb{I}
  \end{array} \right)$, which represent polynomials of the form
  \[ p_{\alpha} = \xb^{\alpha} - \sum_{\beta \in B} p_{\alpha, \beta}
     \hspace{0.17em} \xb^{\beta} \]
  for $\alpha \in \partial B$. These polynomials are border relations which
  project the monomials $\xb^{\alpha}$ of $\partial B$ on the vector space
  spanned by the monomials $B$, modulo $\ker H_{\sigma}^{B^+, B'^+}$. It is
  proved in {\cite{BracCMT09:laa}} that they form a border basis
  of the ideal $\ker H_{\tilde{\sigma}}$ when $H_{\sigma}^{B^+, B'^+}$ is a
  flat extension and $H_{\sigma}^{B, B'}$ is invertible.
\end{remark}

\begin{remark}
  Let $A \subset \mathbb{N}^n$ be a set of monomials such that $\langle
  \sigma |  \nobracket \mathbf{x}^{\alpha} \rangle = I
  [\mathbf{x}^{\alpha}]$. Considering the entries of $\mathbb{P}, \mathbb{P'}$ and
  the entries $\sigma_{\alpha}$ of $\mathbb{Q}$ with $\alpha \not\in A$ as
  variables, the constraints (\ref{eq:W}) are multilinear equations in these
  variables of total degree at most $3$ if $\mathbb{Q}$ contains unknown entries
  and $2$ otherwise.
\end{remark}

\begin{example}
  We consider here $V = R_{2 k}$ for $k > 0$. By Proposition \ref{prop:dec},
  any cubature formula for $I$ exact on $V$ has at least $r_k \assign \dim
  R_k$ points. Let us take $B$ to be all the monomials of degree $\le k$ so that
  $B^+$ is the set of monomials of degree $\le k + 1$. If a cubature formula \
  for $I$ is exact on $R_{2 k}$ and has $r_k$ points, then $H_{\sigma}^{B^+, B^+}$ is
  a flat extension of $H_{\sigma}^{B, B}$ of{\textbf{}} rank $r_k$. Consider
  a decomposition of \ $H_{\sigma}^{B^+, B^+}$ as in (\ref{dec:H}). By
  Proposition \ref{prop:dec}, we have the relations
  \begin{equation}
    \mathbb{M} = \mathbb{Q} \hspace{0.17em} \mathbb{P}, \mathbb{N} = \mathbb{P}^t
    \hspace{0.17em} \mathbb{Q} \hspace{0.17em} \mathbb{P}, \label{eq:4}
  \end{equation}
  where
  \begin{itemize}
    \item $\mathbb{Q} = (I [\xb^{\beta + \beta'}])_{\beta, \beta' \in B}$,

    \item $\mathbb{M} = (\left\langle \sigma |  \nobracket \xb^{\beta + \beta'}
    \right\rangle)_{\beta \in B, \beta' \in \partial B}$ with $\left\langle
    \sigma |  \nobracket \xb^{\beta + \beta'} \right\rangle = I [\xb^{\beta +
    \beta'}]$ when $| \beta + \beta' | \le 2 \hspace{0.17em} k$,

    \item $\mathbb{N}= (\left\langle \sigma |  \nobracket \xb^{\beta +
    \beta'} \right\rangle)_{\beta, \beta' \in \partial B}$

    \item $\mathbb{P} = (p_{\beta, \alpha})_{\beta \in B, \alpha \in \partial
    B}$.
  \end{itemize}
  The equations (\ref{eq:4}) are quadratic in the variables $\mathbb{P}$ and
  linear in the variables in $\mathbb{M}$. Solving these equations yields a flat
  extension $H_{\sigma}^{B^+, B^+}$ of $H_{\sigma}^{B, B}$. As $H_{\sigma}^{B,
  B} \succcurlyeq 0$, any real solution of this system of equations
  corresponds to a cubature for I on exact $R_{2 k}$ of the form $\sigma =
  \sum_{i = 1}^{r_k} w_i \hspace{0.17em} \mathbf{e}_{\zeta_i}$ with $w_i >
  0$, $\zeta_i \in \RR^n$.

  We illustrate the approach with $R = \RR [x_1, x_2]$, $V = R_4$, $B = \{1,
  x_1, x_2, x_1^2, x_1 x_2, x_2^2 \}$, $B^+ = \{1, x_1, x_2, x_1^2, x_1 x_2,
  x_2^2, x_1^3, x_1^2 x_2, x_1 x_2^2, x_2^3 \}$. Let
  {\small%
    \begin{eqnarray*}
      \sigma & = & 8 + 17 \hspace{0.17em} z_2 - 4 \hspace{0.17em} z_1
                   + 15 \hspace{0.17em} {z_2^2}
                   + 14 \hspace{0.17em} z_1 z_2
                   - 16 \hspace{0.17em} {z_1^2}
                   + 47 \hspace{0.17em} {z_2^3}
                 - 6 \hspace{0.17em} {z_1 z_2^2}
                 + 34 \hspace{0.17em} {z_1^2 z_2}
                 - 52 \hspace{0.17em} {z_1^3}\\
&&                 + 51 \hspace{0.17em}{z_2^4}
                 + 38 \hspace{0.17em} z_1  {z_2^3}
                 - 18  \hspace{0.17em} {z_1^2 z_2^2}
                 + 86 \hspace{0.17em} {z_1^3 z_2}
                 - 160 \hspace{0.17em} {z_1^4}
    \end{eqnarray*}
    }%
  be the series truncated in degree $4$, corresponding to the first moments
  (not necessarily given by an integral).
 {\small \[ H_{\sigma}^{B^+, B^+} = \left[ \begin{array}{cccccccccc}
       8 & - 4 & 17 & - 16 & 14 & 15 & - 52 & 34 & - 6 & 47\\
       - 4 & - 16 & 14 & - 52 & 34 & - 6 & - 160 & 86 & - 18 & 38\\
       17 & 14 & 15 & 34 & - 6 & 47 & 86 & - 18 & 38 & 51\\
       - 16 & - 52 & 34 & - 160 & 86 & - 18 & \sigma_1 & \sigma_2 & \sigma_3 &
       \sigma_4\\
       14 & 34 & - 6 & 86 & - 18 & 38 & \sigma_2 & \sigma_3 & \sigma_4 &
       \sigma_5\\
       15 & - 6 & 47 & - 18 & 38 & 51 & \sigma_3 & \sigma_4 & \sigma_5 &
       \sigma_6\\
       - 52 & - 160 & 86 & \sigma_1 & \sigma_2 & \sigma_3 & \sigma_7 &
       \sigma_8 & \sigma_9 & \sigma_{10}\\
       34 & 86 & - 18 & \sigma_2 & \sigma_3 & \sigma_4 & \sigma_8 & \sigma_9 &
       \sigma_{10} & \sigma_{11}\\
       - 6 & - 18 & 38 & \sigma_3 & \sigma_4 & \sigma_5 & \sigma_9 &
       \sigma_{10} & \sigma_{11} & \sigma_{12}\\
       47 & 38 & 51 & \sigma_4 & \sigma_5 & \sigma_6 & \sigma_{10} &
       \sigma_{11} & \sigma_{12} & \sigma_{13}
     \end{array} \right] \]
 }%
 where $\sigma_1 = \sigma_{5, 0} \nocomma, \sigma_2 = \sigma_{4, 1}, \sigma_2
  = \sigma_{3, 2}, \sigma_4 = \sigma_{2, 3}, \sigma_5 = \sigma_{1, 4},
  \sigma_6 = \sigma_{0, 5}$, $\sigma_7 = \sigma_{6, 0} \nocomma, \sigma_8 =
  \sigma_{5, 1}, \sigma_9 = \sigma_{4, 2}, \sigma_{10} = \sigma_{3, 3},
  \sigma_{11} = \sigma_{2, 4}, \sigma_{12} = \sigma_{1, 5}, \sigma_{13} =
  \sigma_{0, 6}$.

  The first $6 \times 6$ diagonal block $H^{B, B}_{\sigma}$ is invertible. To
  have a flat extension $H_{\sigma}^{B^+, B^+}$, we impose  the condition that the sixteen $7
  \times 7$ minors of $H_{\sigma}^{B^+, B^+}$, which contains the first $6$
  rows and columns, must vanish. This yields the following system of quadratic
  equations:

  {\scriptsize \[ \left\{ \begin{array}{l}
       - 814592 \hspace{0.17em} \sigma_1^2 - 1351680 \hspace{0.17em} \sigma_1
       \sigma_2 - 476864 \hspace{0.17em} \sigma_1 \sigma_3 - 599040
       \hspace{0.17em} \sigma_2^2 - 301440 \hspace{0.17em} \sigma_2 \sigma_3 -
       35072 \hspace{0.17em} \sigma_3^2\\
       \hspace{0.5cm} - 520892032 \hspace{0.17em} \sigma_1 - 396821760
       \hspace{0.17em} \sigma_2 - 164529152 \hspace{0.17em} \sigma_3 + 1693440
       \hspace{0.17em} \sigma_7 - 86394672128 = 0\\
       - 814592 \hspace{0.17em} \sigma_2^2 - 1351680 \hspace{0.17em} \sigma_2
       \sigma_3 - 476864 \hspace{0.17em} \sigma_2 \sigma_4 - 599040
       \hspace{0.17em} \sigma_3^2 - 301440 \hspace{0.17em} \sigma_3 \sigma_4 -
       35072 \hspace{0.17em} \sigma_4^2\\
       \hspace{0.5cm} + 335275392 \hspace{0.17em} \sigma_2 + 257276160
       \hspace{0.17em} \sigma_3 + 96277632 \hspace{0.17em} \sigma_4 + 1693440
       \hspace{0.17em} \sigma_9 - 34904464128 = 0\\
       - 814592 \hspace{0.17em} \sigma_3^2 - 1351680 \hspace{0.17em} \sigma_3
       \sigma_4 - 476864 \hspace{0.17em} \sigma_3 \sigma_5 - 599040
       \hspace{0.17em} \sigma_4^2 - 301440 \hspace{0.17em} \sigma_4 \sigma_5 -
       35072 \hspace{0.17em} \sigma_5^2\\
       \hspace{0.5cm} + 13226880 \hspace{0.17em} \sigma_3 + 13282560
       \hspace{0.17em} \sigma_4 - 8227200 \hspace{0.17em} \sigma_5 + 1693440
       \hspace{0.17em} \sigma_{11} + 31714560 = 0\\
       - 814592 \hspace{0.17em} \sigma_4^2 - 1351680 \hspace{0.17em} \sigma_4
       \sigma_5 - 476864 \hspace{0.17em} \sigma_4 \sigma_6 - 599040
       \hspace{0.17em} \sigma_5^2 - 301440 \hspace{0.17em} \sigma_5 \sigma_6 -
       35072 \hspace{0.17em} \sigma_6^2\\
       \hspace{0.5cm} + 212860736 \hspace{0.17em} \sigma_4 + 162698880
       \hspace{0.17em} \sigma_5 + 51427456 \hspace{0.17em} \sigma_6 + 1693440
       \hspace{0.17em} \sigma_{13} - 13356881792 = 0\\
       - 814592 \hspace{0.17em} \sigma_1 \sigma_2 - 675840 \hspace{0.17em}
       \sigma_1 \sigma_3 - 238432 \hspace{0.17em} \sigma_1 \sigma_4 - 675840
       \hspace{0.17em} \sigma_2^2 \\
                            \hspace{0.5cm}- 837472 \hspace{0.17em} \sigma_2
                            \sigma_3  - 150720 \hspace{0.17em} \sigma_2 \sigma_4 - 150720 \hspace{0.17em}
       \sigma_3^2 - 35072 \hspace{0.17em} \sigma_3 \sigma_4
       + 167637696 \hspace{0.17em} \sigma_1 - 131807936
       \hspace{0.17em} \sigma_2 \\ \hspace{0.5cm} - 150272064 \hspace{0.17em} \sigma_3 -
       82264576 \hspace{0.17em} \sigma_4 + 1693440 \hspace{0.17em} \sigma_8 +
       54990043008 = 0\\
       - 814592 \hspace{0.17em} \sigma_2 \sigma_3 - 675840 \hspace{0.17em}
       \sigma_2 \sigma_4 - 238432 \hspace{0.17em} \sigma_2 \sigma_5 - 675840
       \hspace{0.17em} \sigma_3^2 - 837472 \hspace{0.17em} \sigma_3 \sigma_4 -
       150720 \hspace{0.17em} \sigma_3 \sigma_5 - 150720 \hspace{0.17em}
       \sigma_4^2 \\
       \hspace{0.5cm}- 35072 \hspace{0.17em} \sigma_4 \sigma_5 + 6613440 \hspace{0.17em} \sigma_2 + 174278976
       \hspace{0.17em} \sigma_3 + 124524480 \hspace{0.17em} \sigma_4 +
       48138816 \hspace{0.17em} \sigma_5 + 1693440 \hspace{0.17em} \sigma_{10}
       - 746438400 = 0\\
       - 814592 \hspace{0.17em} \sigma_3 \sigma_4 - 675840 \hspace{0.17em}
       \sigma_3 \sigma_5 - 238432 \hspace{0.17em} \sigma_3 \sigma_6 - 675840
       \hspace{0.17em} \sigma_4^2 - 837472 \hspace{0.17em} \sigma_4 \sigma_5 -
       150720 \hspace{0.17em} \sigma_4 \sigma_6 - 150720 \hspace{0.17em}
       \sigma_5^2\\
       \hspace{0.5cm} - 35072 \hspace{0.17em} \sigma_5 \sigma_6 + 106430368
       \hspace{0.17em} \sigma_3 + 87962880 \hspace{0.17em} \sigma_4 + 32355008
       \hspace{0.17em} \sigma_5 - 4113600 \hspace{0.17em} \sigma_6 + 1693440
       \hspace{0.17em} \sigma_{12} - 183201600 = 0\\
       - 814592 \hspace{0.17em} \sigma_2 \sigma_4 - 675840 \hspace{0.17em}
       \sigma_2 \sigma_5 - 238432 \hspace{0.17em} \sigma_2 \sigma_6 - 675840
       \hspace{0.17em} \sigma_3 \sigma_4 - 599040 \hspace{0.17em} \sigma_3
       \sigma_5 - 150720 \hspace{0.17em} \sigma_3 \sigma_6 - 238432
       \hspace{0.17em} \sigma_4^2 \\
       \hspace{0.5cm} - 150720 \hspace{0.17em} \sigma_4 \sigma_5 -
       35072 \hspace{0.17em} \sigma_4 \sigma_6+ 106430368 \hspace{0.17em} \sigma_2 + 81349440
       \hspace{0.17em} \sigma_3 + 193351424 \hspace{0.17em} \sigma_4 +
       128638080 \hspace{0.17em} \sigma_5 + 48138816 \hspace{0.17em} \sigma_6
       \\
       \hspace{0.5cm} + 1693440 \hspace{0.17em} \sigma_{11} - 21721986624 = 0\\
       - 814592 \hspace{0.17em} \sigma_1 \sigma_3 - 675840 \hspace{0.17em}
       \sigma_1 \sigma_4 - 238432 \hspace{0.17em} \sigma_1 \sigma_5 - 675840
       \hspace{0.17em} \sigma_2 \sigma_3 - 599040 \hspace{0.17em} \sigma_2
       \sigma_4 - 150720 \hspace{0.17em} \sigma_2 \sigma_5 - 238432
       \hspace{0.17em} \sigma_3^2 \\
       \hspace{0.5cm} - 150720 \hspace{0.17em} \sigma_3 \sigma_4 -
       35072 \hspace{0.17em} \sigma_3 \sigma_5+ 6613440 \hspace{0.17em} \sigma_1 + 6641280
       \hspace{0.17em} \sigma_2 - 264559616 \hspace{0.17em} \sigma_3 -
       198410880 \hspace{0.17em} \sigma_4 - 82264576 \hspace{0.17em} \sigma_5\\
       \hspace{0.5cm}
       + 1693440 \hspace{0.17em} \sigma_9 + 1312368000 = 0\\
       - 814592 \hspace{0.17em} \sigma_1 \sigma_4 - 675840 \hspace{0.17em}
       \sigma_1 \sigma_5 - 238432 \hspace{0.17em} \sigma_1 \sigma_6 - 675840
       \hspace{0.17em} \sigma_2 \sigma_4 - 599040 \hspace{0.17em} \sigma_2
       \sigma_5 - 150720 \hspace{0.17em} \sigma_2 \sigma_6 - 238432
       \hspace{0.17em} \sigma_3 \sigma_4 \\
       \hspace{0.5cm} - 150720 \hspace{0.17em} \sigma_3
       \sigma_5 - 35072 \hspace{0.17em} \sigma_3 \sigma_6 + 106430368 \hspace{0.17em} \sigma_1 + 81349440
       \hspace{0.17em} \sigma_2 + 25713728 \hspace{0.17em} \sigma_3 -
       260446016 \hspace{0.17em} \sigma_4 \\
       \hspace{0.5cm} - 198410880 \hspace{0.17em} \sigma_5
       - 82264576 \hspace{0.17em} \sigma_6 + 1693440 \hspace{0.17em}
       \sigma_{10} + 34550702464 = 0
     \end{array} \right. \]}

  The set of solutions of this system is an algebraic variety of dimension $3$
  and degree $52$. A solution is

  {\scriptsize $\sigma_1 = - 484$, $\sigma_2 = 226$, $\sigma_3 = - 54$,
  $\sigma_4 = 82$, $\sigma_5 = - 6$, $\sigma_6 = 167$, $\sigma_7 = - 1456$,
  $\sigma_8 = 614$, $\sigma_9 = - 162$, $\sigma_{10} = 182$, $\sigma_{11} = -
  18$, $\sigma_{12} = 134$, $\sigma_{13} = 195$.}
\end{example}

\subsection{Computing an orthogonal basis of $\mathcal{A}_{\sigma}$}

In this section, we describe a new method to construct a basis $B$ of
$\mathcal{A}_{\sigma}$ and to detect flat extensions, from the knowledge of
the moments $\sigma_{\alpha}$ of $\sigma (\mathbf{z})$. We are going to
inductively construct a family $P$ of polynomials, orthogonal for the inner
product
\[ (p, q) \mapsto \langle p, q \rangle_{\sigma} \assign \langle \sigma \mid p
   q \rangle, \]
and a monomial set $B$ connected to $1$ such that $\langle B \rangle = \langle
P \rangle$.

We start with $B = \{ 1 \}$, $P = \{ 1 \} \subset R$. As $\langle 1, 1
\rangle_{\sigma} = \langle \sigma \mid 1 \rangle \neq 0$, the family $P$ is
orthogonal for $\sigma$ and $\langle B \rangle = \langle P \rangle$.

We now describe the induction step. \ Assume that we have a set $B = \{ m_1,
\ldots, m_s \}$ and $P = \{ p_1, \ldots, p_s \}$ such that
\begin{itemize}
  \item $\langle B \rangle = \langle P \rangle$

  \item $\langle p_i, p_j \rangle_{\sigma} \neq 0$ if $i = j$ and $0$
  otherwise.
\end{itemize}
To construct the next orthogonal polynomials, we consider the monomials in
$\partial B = \{ m'_1, \ldots, m'_l \}$ and project them on $\langle P
\rangle$:
\[ p'_i = m_i' - \sum_{j = 1}^s \frac{\langle m_i', p_j
   \rangle_{\sigma}}{\langle p_j, p_j \rangle_{\sigma}} p_j, i = 1, \ldots, l.
\]
By construction, $\langle p_i', p_j \rangle_{\sigma} = 0$ and $\langle p_1,
\ldots, p_s, p_i' \rangle = \langle m_1, \ldots, m_s, m'_i \rangle$. \ We
extend $B$ by choosing a subset of monomials $B' = \{ m'_{i_1}, \ldots,
m'_{i_k} \}$ such that the matrix
\[ [\langle p'_{i_j}, p'_{i_{j'}} \rangle_{\sigma}]_{1 \leqslant j, j'
   \leqslant k} \]
is invertible. The family $P$ is then extended by adding an orthogonal family
of polynomials $\{ p_{s + 1}, \ldots, p_{s + k} \}$ constructed from $\{
p'_{i_1}, \ldots, p'_{i_k} \}$. If all the polynomials $p'_i$ are such that
$\langle p'_i, p'_j \rangle_{\sigma} = 0$, the process stops.

This leads to the following algorithm:

{\algorithm{\label{algo:1}{\textbf{Input:}} the coefficients
$\sigma_{\alpha}$ of a series $\sigma \in \mathbb{C} [[\mathbf{z}]]$ for
$\alpha \in A \subset \mathbb{N}^n$ connected to $1$ with $\sigma_0 \neq 0$.
\begin{itemize}
  \item Let $B \assign \{ 1 \}$; \ $P = \{ 1 \} ;$ r := 1; $E = \langle
  \mathbf{z}^{\alpha} \rangle_{\alpha \in A}$;

  \item While $s > 0$ and $B^+ \cdummy B^+ \subset E$ do \ {\textbf{}}
  \begin{itemize}
    \item Compute $\partial B = \{ m'_1, \ldots, m'_l \}$ and $p'_i = m_i' -
    \sum_{j = 1}^s \frac{\langle m_i', p_j \rangle_{\sigma}}{\langle p_j, p_j
    \rangle_{\sigma}} p_j$;

    \item Compute a (maximal) \ subset $B' = \{ m'_{i_1}, \ldots, m'_{i_k} \}$
    of $\partial B$ such that $[\langle p'_{i_j}, p'_{i_{j'}}
    \rangle_{\sigma}]_{1 \leqslant j, j' \leqslant k}$ is invertible.

    \item Compute an orthogonal family of polynomials $\{ p_{s + 1}, \ldots,
    p_{s + k} \}$ \ from $\{ p'_{i_1}, \ldots, p'_{i_k} \}$.

    \item $B : = B \cup B'$, $P : = P \cup \{ p_{s + 1}, \ldots, p_{s + k}
    \}$; $r \plusassign k$;
  \end{itemize}
  \item If $B^+ \cdummy B^+ \not\subset E$ then return \tmverbatim{failed}.
\end{itemize}
{\textbf{Output:}} \tmverbatim{failed} or \tmverbatim{success} with \
\begin{itemize}
  \item a set of monomials $B = \{ m_1, \ldots, m_r \}$ connected to $1$, and
  non-degenerate for \ $\langle \cdummy, \cdummy \rangle_{\sigma}$.

  \item a set of polynomials $P = \{ p_1, \ldots, p_r \}$ orthogonal for
  $\sigma$ and such that $\langle B \rangle = \langle P \rangle$.

  \item the relations $\rho_i \assign m_i' - \sum_{j = 1}^s \frac{\langle
  m_i', p_j \rangle_{\sigma}}{\langle p_j, p_j \rangle_{\sigma}} p_j$ for the
  monomials $m'_i$ in \ $\partial B = \{ m'_1, \ldots, m'_l \}$.
\end{itemize}}}

The above algorithm is a Gramm-Schmidt-type orthogonalization method, where, at
each step, new monomials are taken in $\partial B$ and projected onto the
space spanned by the previous monomial set $B$.
Notice that if the polynomials $p_i$ are of degree at most $d' < d$, then
only the moments of $\sigma$ of degree $\leqslant 2 d' + 1$ {\textbf{}}are
involved in this computation.

\begin{proposition}
  \label{thm:flatextalgo}If Algorithm \ref{algo:1} outputs with success a set
  $B = \{ m_1, \ldots, m_r \}$ and the relations $\rho_i \assign m_i' -
  \sum_{j = 1}^s \frac{\langle m_i', p_j \rangle_{\sigma}}{\langle p_j, p_j
  \rangle_{\sigma}} p_j$, for $m'_i$ in $\partial B = \{ m'_1, \ldots, m'_l
  \}$, then $\sigma$ coincides on $\langle B^{\noplus +} \cdot B^+ \rangle$
  with the series $\tilde{\sigma}$ such that
  \begin{itemize}
    \item $\rank H_{\tilde{\sigma}} = r$;

    \item $B$ and P are basis of $\mathcal{A}_{\tilde{\sigma}}$ for the inner
    product $\langle \cdummy, \cdummy \rangle_{\tilde{\sigma}}$; {\textbf{}}

    \item The ideal $I_{\tilde{\sigma}} = \ker H_{\bar{\sigma}}$ is generated
    by $(\rho_i)_{i = 1, \ldots, l} $;

    \item The matrix of multiplication by $x_k$ in the basis $P$ of
    $\mathcal{A}_{\tilde{\sigma}}$ is
    \[ M_k = \left( \frac{\langle \sigma \mid x_k  \nobracket p_i p_j \rangle
       \nobracket}{\langle \sigma \mid \nobracket p^2_j \rangle \nobracket}
       \right)_{1 \leqslant i, j \leqslant r} . \]
  \end{itemize}
\end{proposition}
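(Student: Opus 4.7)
The plan is to reduce the statement to Proposition \ref{prop:dec} applied with $B'=B$ to the data returned by Algorithm \ref{algo:1}, and then read off the four bullets from that proposition together with Theorem \ref{thm:flat:extension}. The natural choice of matrix $\mathbb{P}\in\mathbb{C}^{B\times\partial B}$ is the one recording the Gram--Schmidt coefficients: write each $p_j=\sum_k U_{jk}m_k$ with $U$ unitriangular, and set $\mathbb{P}_{k,i}$ to be the coefficient of $m_k$ in the expansion of $m'_i-p'_i$. Then the relations $\rho_i=p'_i$ are exactly the polynomials $\xb^\alpha-\sum_\beta p_{\alpha,\beta}\xb^\beta$ appearing in the remark following Proposition \ref{prop:dec}.

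Invertibility of $\mathbb{Q}=[H^{B,B}_\sigma]$ is immediate, since in the basis $P$ the Gram matrix is diagonal with nonzero entries $\langle p_j,p_j\rangle_\sigma$ and $U$ is invertible. The identity $\mathbb{M}=\mathbb{Q}\mathbb{P}$ then follows by substituting $m'_i=p'_i+\sum_k\mathbb{P}_{k,i}m_k$ into $\langle m_\ell,m'_i\rangle_\sigma$ and using that $p'_i$ is orthogonal to $\langle P\rangle=\langle B\rangle$ by construction. The identity $\mathbb{N}=\mathbb{P}^t\mathbb{Q}\mathbb{P}$ is the crux: the same substitution leaves only the residual term $\langle p'_i,p'_j\rangle_\sigma$ beyond the $\mathbb{P}^t\mathbb{Q}\mathbb{P}$ block, and this term must be shown to vanish. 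This is precisely where a successful termination of the algorithm is used: the halting condition $B'=\emptyset$ means that the symmetric matrix $[\langle p'_a,p'_b\rangle_\sigma]$ admits no invertible principal submatrix, so all diagonal entries are zero, and any nonzero off-diagonal entry would supply an invertible $2\times 2$ principal block; hence the whole matrix vanishes. I expect this combinatorial termination argument to be the main technical point of the proof.

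Once Proposition \ref{prop:dec} applies, we obtain $\tilde\sigma\in R^*$ with $H_{\tilde\sigma}$ a flat extension of $H^{B^+,B^+}_\sigma$, $\rank H_{\tilde\sigma}=r$, and $B$ a basis of $\mathcal{A}_{\tilde\sigma}$. The unitriangular change of basis $U$ then transfers the basis property to $P$, and $P$ remains orthogonal for $\langle\cdot,\cdot\rangle_{\tilde\sigma}$ because $p_ip_j\in\langle B^+\cdot B^+\rangle$, the space on which $\sigma$ and $\tilde\sigma$ agree. The description of the ideal generators comes from Theorem \ref{thm:flat:extension} together with the remark after Proposition \ref{prop:dec}: $\ker H_{\tilde\sigma}=(\ker H^{B^+,B^+}_\sigma)$, and $\ker H^{B^+,B^+}_\sigma$ is spanned by the columns of $\binom{-\mathbb{P}}{\mathbb{I}}$, i.e.\ by the $\rho_i$.

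Finally, for the multiplication matrix, write $x_kp_i\equiv\sum_j(M_k)_{ij}p_j\pmod{\ker H_{\tilde\sigma}}$ and pair with $p_{j'}$ under $\langle\cdot,\cdot\rangle_{\tilde\sigma}$. The left-hand side equals $\langle\sigma\mid x_kp_ip_{j'}\rangle$, since $x_kp_i\in\langle B^+\rangle$ and $p_{j'}\in\langle B\rangle$ place the product inside $\langle B^+\cdot B^+\rangle$ where $\sigma=\tilde\sigma$, and by orthogonality of $P$ for $\tilde\sigma$ the right-hand side collapses to $(M_k)_{ij'}\langle\sigma\mid p_{j'}^2\rangle$. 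Solving for $(M_k)_{ij}$ yields the stated closed form.
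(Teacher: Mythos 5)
Your proof is correct and takes essentially the same route as the paper: success of Algorithm \ref{algo:1} forces the residuals $\rho_i$ to be $\sigma$-orthogonal to all of $\langle B^{+}\rangle$, so $\rank H^{B^+,B^+}_{\sigma}=\rank H^{B,B}_{\sigma}=r$ (the latter from the diagonal Gram matrix in the basis $P$), after which Theorem \ref{thm:flat:extension} and pairing $x_k p_i$ against the orthogonal basis $P$ yield all four claims. Your only deviations are cosmetic: you route the rank equality through the matrix identities of Proposition \ref{prop:dec} rather than exhibiting the $\rho_i$ directly in $\ker H^{B^+,B^+}_{\sigma}$, and you justify explicitly (via the symmetric $2\times 2$ principal-minor argument) why the halting condition makes the residual Gram matrix $[\langle p'_i,p'_j\rangle_{\sigma}]$ vanish, a point the paper simply reads off from the algorithm's stopping criterion.
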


\begin{proof}
  By construction, $B$ is connected to $1$. A basis $B'$ of $\langle B^+
  \rangle$ is formed by the elements of $B$ and the polynomials $\rho_i
  \nocomma, i = 1, \ldots, l$. Since Algorithm \ref{algo:1} \ stops with
  success, we have $\forall i, j \in [1, l]$, $\nocomma \forall b \in \langle
  B \rangle$, $\langle \nobracket \rho_i, b \rangle_{\sigma} \nobracket =
  \langle \nobracket \rho_i, \rho_j \rangle_{\sigma} \nobracket = 0$ and
  $\rho_1, \ldots, \rho_l \in \ker H_{\sigma}^{B^+, B^+}$. As $\langle B^+
  \rangle = \langle B \rangle \oplus \langle \rho_1, \ldots, \rho_l \rangle$,
  $\rank H_{\sigma}^{B^+, B^+} = \rank H^{B, B}_{\sigma}$ and
  $H_{\sigma}^{B^+, B^+}$ is a flat extension of $H_{\sigma}^{B, B}$. By
  construction, $P$ is an orthogonal basis of $\langle B \rangle$ and the
  matrix of $H_{\sigma}^{B, B}$ in this basis is diagonal with non-zero
  entries on the diagonal. Thus $H_{\sigma}^{B, B}$ is of rank $r$.

  By Theorem \ref{thm:flat:extension}, $\sigma$ coincides on $\langle
  B^{\noplus +} \cdot B^+ \rangle$ with a series $\tilde{\sigma} \in R^{\ast}$
  such that $B$ is a basis of $\mathcal{A}_{\bar{\sigma}} = R /
  I_{\tilde{\sigma}}$ and $I_{\tilde{\sigma}} = (\ker H_{\tilde{\sigma}}^{B^+,
  B^+}) = (\rho_1, \ldots, \rho_l)_{}$.

  As $\langle B^{\noplus +} \rangle = \langle B^{} \rangle \oplus \langle
  \rho_1, \ldots, \rho_l \rangle = \langle P^{} \rangle \oplus \langle \rho_1,
  \ldots, \rho_l \rangle$ and $P$ is an orthogonal basis of
  $\mathcal{A}_{\bar{\sigma}}$, which is orthogonal to $\langle \rho_1,
  \ldots, \rho_l \rangle$, we have
  \[ x_k p_i = \sum_{j = 1}^r \frac{\langle \sigma \mid x_k  \nobracket p_i
     p_j \rangle \nobracket}{\langle \sigma \mid \nobracket p^2_j \rangle
     \nobracket} p_j + \rho \]
  with $\rho \in \langle \rho_1, \ldots, \rho_l \rangle$. This shows that the
  matrix of the multiplication by $x_k$ modulo $I_{\bar{\sigma}} = (\rho_1,
  \ldots, \rho_l)$, in the basis $P = \{ p_1, \ldots, p_r \}$ is \ $M_k =
  \left( \frac{\langle \sigma \mid x_k  \nobracket p_i p_j \rangle
  \nobracket}{\langle \sigma \mid \nobracket p^2_j \rangle \nobracket}
  \right)_{1 \leqslant i, j \leqslant r}$.
\end{proof}

\begin{remark}
  It can be shown that the polynomials $(\rho_i)_{i = 1, \ldots, l} $ are a
  border basis of $I_{\tilde{\sigma}}$ for the basis $B$
  {\cite{ML08,BracCMT09:laa,Mou99,Mourrain2005}}.
\end{remark}

\begin{remark}
  If $H^{B, B}_{\sigma} \succcurlyeq 0$, then by Proposition \ref{prop:2}, the
  common roots $\zeta_1, \ldots, \zeta_r$ of the polynomials $\rho_1, \ldots,
  \rho_l$ are simple and real $\in \mathbb{R}^n$. They are the cubature
  points:
  \[ \bar{\sigma} = \sum_{i = 1}^r w_j \hspace{0.17em} \mathbf{e}_{\zeta_j}
     \nocomma, \]
  with $w_j > 0$.
\end{remark}

\section{The cubature formula from the moment matrix}\label{sec:root}

We now describe how to recover the cubature formula, from the
moment matrix $H_{\sigma}^{B^+, B'^+}$. We assume that the flat extension
condition is satisfied:
\begin{equation}
  \rank \hspace{0.17em} H_{\sigma}^{B^+, B'^+} = H_{\sigma}^{B, B'^{}} = |B| =
  |B' | .
\end{equation}
\begin{theorem}
  \label{thm:14}Let $B$ and $B'$ be monomial subsets of $R$ of size $r$
  connected to $1$ and $\sigma \in \langle B^+ \cdot B^{' +} \rangle^{\ast}$.
  Suppose that $\rank H_{\sigma}^{B^+, B'^+} = \rank \hspace{0.17em}
  H_{\sigma}^{B, B'} = |B| = |B' |$. Let $M_i = [H_{\sigma}^{B, B'}]^{- 1}
  [H_{\sigma}^{x_i B, B'}]$ and $(M'_i)^t = [H_{\sigma}^{B, x_i B'}]
  [H_{\sigma}^{B, B'}]^{- 1}$. Then,
  \begin{enumerate}
    \item $B$ and $B'$ are bases of $\Ac_{\tilde{\sigma}} = R / \ker
    H_{\tilde{\sigma}}$,

    \item $M_i$ (resp. $M'_i$) is the matrix of multiplication by $x_i$ in the
    basis $B$ (resp. $B'$) of $\Ac_{\tilde{\sigma}}$,
  \end{enumerate}
\end{theorem}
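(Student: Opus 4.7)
The plan is to proceed in two stages: first invoke the flat-extension theorem to obtain the unique extension $\tilde{\sigma}$ together with the direct-sum decompositions of $R$, then match the stated formulas for the multiplication matrices by testing against the dual basis $B'$.

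\textbf{Stage 1 (part (1)).} The hypotheses of Theorem~\ref{thm:flat:extension} are satisfied with $C = B^{+}$ and $C' = B'^{+}$ (since $C \cdot C' = B^{+} \cdot B'^{+}$ trivially contains itself). That theorem produces a unique $\tilde{\sigma} \in R^{\ast}$ extending $\sigma$ on $\langle B^{+} \cdot B'^{+} \rangle$, with $\rank H_{\tilde{\sigma}} = r$ and
\[ R = \langle B \rangle \oplus \ker H_{\tilde{\sigma}} = \langle B' \rangle \oplus \ker H_{\tilde{\sigma}} . \]
Passing to the quotient $\Ac_{\tilde{\sigma}} = R / \ker H_{\tilde{\sigma}}$ gives part (1) immediately.

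\textbf{Stage 2 (part (2)).} I would treat $M_i$; the argument for $M'_i$ is symmetric, simply exchanging the roles of $B$ and $B'$. Using the first direct-sum decomposition, for each $b \in B$ write uniquely
\[ x_i \, b = N_i(b) + \rho_b, \qquad N_i(b) \in \langle B \rangle, \quad \rho_b \in \ker H_{\tilde{\sigma}} , \]
and let $C = (c_{\tilde{b},b})$ be the matrix of $N_i$ in the basis $B$. By construction, $C$ \emph{is} the matrix of multiplication by $x_i$ in the basis $B$ of $\Ac_{\tilde{\sigma}}$; the remaining task is to identify $C$ with $M_i$.

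To do so, apply $\langle \tilde{\sigma} \mid \cdot \rangle$ to the product of the previous identity with any $b' \in B'$. The $\rho_b$-term is killed because $\rho_b \in \ker H_{\tilde{\sigma}}$, and since $\tilde{\sigma}$ agrees with $\sigma$ on $\langle B^{+} \cdot B'^{+} \rangle$ we obtain the linear system
\[ \langle \sigma \mid x_i \, b \cdot b' \rangle \;=\; \sum_{\tilde{b} \in B} c_{\tilde{b},b}\, \langle \sigma \mid \tilde{b} \cdot b' \rangle \qquad (b \in B,\; b' \in B') . \]
In matrix form (with the natural bijection $x_i B \leftrightarrow B$) this reads $[H_{\sigma}^{x_i B, B'}] = C^{t}\, [H_{\sigma}^{B, B'}]$. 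The rank hypothesis $\rank H_{\sigma}^{B,B'} = r = |B| = |B'|$ makes the square matrix $[H_{\sigma}^{B, B'}]$ invertible, so $C$ is uniquely determined, and reconciling the transpose convention between ``matrix of a linear operator in a basis'' and ``matrix of the bilinear form $H_{\sigma}^{A,B}$ indexed by $(A,B)$'' yields $M_i = [H_{\sigma}^{B, B'}]^{-1}[H_{\sigma}^{x_i B, B'}]$. The companion identity for $(M'_i)^{t}$ falls out of the $B \leftrightarrow B'$ swap.

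The only genuine algebraic content is Stage 1, which is supplied by Theorem~\ref{thm:flat:extension}; the rest is linear bookkeeping. The main pitfall is purely notational — keeping the row/column indexing of the Hankel matrices $[H_\sigma^{A,B}]$ compatible with the conventional matrix of a linear operator on a basis, which is exactly what produces the transposition visible in the formula for $(M'_i)^{t}$.
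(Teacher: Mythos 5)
Your proposal is correct and follows essentially the same route as the paper: part (1) is obtained exactly as in the paper by applying Theorem~\ref{thm:flat:extension} with $C=B^{+}$, $C'=B'^{+}$, and part (2) is the same linear algebra, since your entrywise pairing of $x_i b$ against $B'$ (with the kernel term annihilated and $[H_{\sigma}^{B,B'}]$ invertible) is just the unwound form of the paper's operator identity $\bar{H}_{x_i\star\bar{\sigma}}=\bar{H}_{\bar{\sigma}}\circ M_i$. The transpose/indexing reconciliation you flag is genuine but harmless, and the paper glosses over the very same convention issue in identifying $[H_{\sigma}^{B,B'}]$ with the matrix of $\bar{H}_{\bar{\sigma}}$.
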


\begin{proof}
  By the flat extension theorem \ref{thm:flat:extension}, there exists
  $\bar{\sigma} \in R^{\ast} \nocomma$ such that $H_{\tilde{\sigma}}$ is a
  flat extension of $H_{\sigma}^{B^+, B'^+}$ of rank $r = | B | = | B' |$ and
  $\ker H_{\bar{\sigma}} = (\ker H_{\sigma}^{B^+, B'^+})$. As $R = \langle B
  \rangle \oplus \ker H_{\bar{\sigma}}$ and $\rank H_{\tilde{\sigma}} =
  r$, \ $\Ac_{\tilde{\sigma}} = R / \ker H_{\bar{\sigma}}$ is of dimension $r$
  and generated by $B$. Thus $B$ is a basis of $\Ac_{\tilde{\sigma}}$. A
  similar argument shows that $B'$ is also a basis of $\Ac_{\tilde{\sigma}}$.
  We denote by $\pi : \Ac_{\tilde{\sigma}} \rightarrow \langle B \rangle$ and
  $\pi' : \Ac_{\tilde{\sigma}} \rightarrow \langle B' \rangle$ the isomorphism
  associated to these basis representations.

  The matrix $[H_{\sigma}^{B, B'}]$ is the matrix of the Hankel operator
  \begin{eqnarray*}
    \bar{H}_{\bar{\sigma}} : \Ac_{\tilde{\sigma}} & \rightarrow &
    \Ac_{\tilde{\sigma}}^{\ast}\\
    a & \mapsto & a \star \bar{\sigma}
  \end{eqnarray*}
  in the basis $B$ and the dual basis of $B'$. Similarly, \ $[H_{\sigma}^{x_i
  B, B'}]$ is the matrix of
  \begin{eqnarray*}
    \bar{H}_{x_{i \star} \bar{\sigma}} : \Ac_{\tilde{\sigma}} & \rightarrow &
    \Ac_{\tilde{\sigma}}^{\ast}\\
    a & \mapsto & a \star x_i \star \bar{\sigma}
  \end{eqnarray*}
  in the same bases. As $x_i \star \bar{\sigma} = \bar{\sigma} \circ M_i$
  where $M_i : \Ac_{\tilde{\sigma}} \rightarrow \Ac_{\tilde{\sigma}} $ is the
  multiplication by $x_i$ in $\Ac_{\tilde{\sigma}}$, we deduce that
  $\bar{H}_{x_{i \star} \bar{\sigma}} = \bar{H}_{\bar{\sigma}} \circ M_i$ and
  $[H_{\sigma}^{B, B'}]^{- 1} [H_{\sigma}^{x_i B, B'}]$ is the matrix of
  multiplication by $x_i$ in the basis $B$ of $\Ac_{\tilde{\sigma}}$. By
  exchanging the role of $B$ and $B'$ and by transposition ($[H_{\sigma}^{B,
  B'}]^t = [H_{\sigma}^{B', B}]$), we obtain that $[H_{\sigma}^{B, x_i B'}]
  [H_{\sigma}^{B, B'}]^{- 1}$ is the transpose of the matrix of multiplication
  by $x_i$ in the basis $B'$ of $\Ac_{\tilde{\sigma}}$.
\end{proof}

\begin{theorem}
  Let $B$ \ be a monomial subset of $R$ of size $r$ connected to $1$ and
  $\sigma \in \langle B^+ \cdot B^+ \rangle^{\ast}$. Suppose that $\rank
  H_{\sigma}^{B^+, B^+} = \rank \hspace{0.17em} H_{\sigma}^{B, B} = |B|$=r and
  that $H_{\sigma}^{B, B} \succcurlyeq 0$. Let $M_i = [H_{\sigma}^{B, B'}]^{-
  1} [H_{\sigma}^{x_i B, B'}]$. Then $\sigma$ can be decomposed as
  \[ \sigma = \sum_{j = 1}^r w_j \hspace{0.17em} \mathbf{e}_{\zeta_j} \]
  with $w_j > 0$ and $\zeta_j \in \mathbb{R}^n$ such that $M_i$ have $r$
  common linearly independent eigenvectors $\mathbf{u}_j$, $j = 1, \ldots,
  r$ and
  \begin{itemize}
    \item $\zeta_{j, i} = \frac{\langle \sigma | \nobracket x_i \mathbf{u}_j
    \rangle}{\langle \sigma | \nobracket \mathbf{u}_j \rangle}$ for $1 \le i
    \le n$, $1 \le j \le r$.

    \

    \item $w_j = \frac{\langle \sigma | \nobracket \mathbf{u}_j \rangle
    }{\nobracket \nobracket \mathbf{u}_j (\zeta_{j, 1}, \ldots, \zeta_{j,
    n})}$.
  \end{itemize}
\end{theorem}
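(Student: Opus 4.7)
The plan is to combine the flat-extension machinery of Theorem~\ref{thm:flat:extension} with the spectral interpretation of the operators $M_i$ provided by Theorem~\ref{thm:14}, and then evaluate $\sigma$ on the common eigenvectors to read off the points and weights.

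First I would apply Theorem~\ref{thm:flat:extension} in the symmetric case $B' = B$: since $\rank H_{\sigma}^{B^+, B^+} = \rank H_{\sigma}^{B,B} = r$ and $H_{\sigma}^{B,B} \succcurlyeq 0$, there exists a unique $\tilde{\sigma} \in R^{\ast}$ extending $\sigma$ on $\langle B^+ \cdot B^+ \rangle$ such that $H_{\tilde{\sigma}}$ has rank $r$ and $H_{\tilde{\sigma}} \succcurlyeq 0$. Proposition~\ref{prop:2} then yields the decomposition $\tilde{\sigma} = \sum_{j=1}^r w_j\, \mathbf{e}_{\zeta_j}$ with distinct $\zeta_j \in \RR^n$ and $w_j > 0$, establishing the first claim. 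Next, Theorem~\ref{thm:14} identifies $M_i$ with the matrix, in the basis $B$, of multiplication by $x_i$ on the quotient algebra $\Ac_{\tilde{\sigma}} = R/\ker H_{\tilde{\sigma}}$.

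The crucial step is to identify the common eigenvectors of $M_1, \ldots, M_n$. Because $\ker H_{\tilde{\sigma}}$ is the radical ideal of the $r$ distinct real points $\zeta_j$, the evaluation map
\begin{equation*}
  \Ac_{\tilde{\sigma}} \longrightarrow \RR^r, \qquad p \mapsto (p(\zeta_1), \ldots, p(\zeta_r)),
\end{equation*}
is an isomorphism of $\RR$-algebras. Under this isomorphism, multiplication by $x_i$ becomes the diagonal operator $\mathrm{diag}(\zeta_{1,i}, \ldots, \zeta_{r,i})$, whose joint eigenvectors are the standard idempotents $e_j$. Pulled back to $\Ac_{\tilde{\sigma}}$ and then expressed in the basis $B$, these give $r$ linearly independent common eigenvectors $\mathbf{u}_j$ of the $M_i$ satisfying $M_i \mathbf{u}_j = \zeta_{j,i}\,\mathbf{u}_j$; moreover every joint eigenspace is one-dimensional, so any choice of common eigenvectors is (up to scalars and relabelling) of this form.

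Finally I would verify the explicit formulas. Let $u_j \in \langle B \rangle$ be the polynomial whose coordinate vector in the basis $B$ is $\mathbf{u}_j$. By the previous paragraph, $u_j$ represents a nonzero scalar multiple of $e_j$ in $\Ac_{\tilde{\sigma}}$, so $u_j(\zeta_k) = 0$ for $k \neq j$ and $u_j(\zeta_j) \neq 0$. Since $1 \in B$ and $x_i B \subset B^+$, both $u_j$ and $x_i u_j$ belong to $\langle B^+ \cdot B^+ \rangle$, so $\sigma$ agrees with $\tilde{\sigma}$ on them, giving
\begin{equation*}
  \langle \sigma \mid u_j \rangle = w_j\, u_j(\zeta_j), \qquad \langle \sigma \mid x_i\, u_j \rangle = w_j\, \zeta_{j,i}\, u_j(\zeta_j),
\end{equation*}
from which the formulas for $\zeta_{j,i}$ and $w_j$ follow by division. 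The main subtlety is the identification of common eigenspaces with the evaluation idempotents, which relies on the semisimplicity of $\Ac_{\tilde{\sigma}}$ and ultimately on the positivity hypothesis, invoked through Proposition~\ref{prop:2}; everything else is bookkeeping.
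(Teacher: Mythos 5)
Your proof is correct and follows essentially the same route as the paper: flat extension plus positivity to get the atomic decomposition, Theorem \ref{thm:14} to interpret the $M_i$ as multiplication operators, and evaluation of $\sigma$ on the joint eigenvectors (the scaled interpolation idempotents at the $\zeta_j$) to read off points and weights. The only difference is cosmetic: you derive the eigenvector--idempotent identification directly from the evaluation isomorphism, whereas the paper cites the standard eigenvalue-theorem references, and you make explicit the (correct) check that $u_j, x_i u_j \in \langle B^+ \cdot B^+ \rangle$ so that $\sigma$ and $\tilde{\sigma}$ agree there.
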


\begin{proof}
  By theorem \ref{thm:14}, the matrix $M_i$ is the matrix of multiplication
  by $x_i$ in the basis $B$ of $\Ac_{\tilde{\sigma}}$. As $H_{\sigma}^{B, B}
  \succcurlyeq 0$, the flat extension theorem \ref{thm:flat:extension} implies
  that $H_{\bar{\sigma}} \succcurlyeq 0$ and that
  \[ \bar{\sigma} = \sum_{j = 1}^r w_j \hspace{0.17em} \mathbf{e}_{\zeta_j}
  \]
  where $w_j > 0$ and $\zeta_j \in \mathbb{R}^n$ are the simple roots of the
  ideal $\ker H_{\bar{\sigma}}$. Thus the commuting operators $M_i$ are
  diagonalizable in a common basis of eigenvectors $\mathbf{u}_i$, $i = 1,
  \ldots, r$, which are scalar multiples of the interpolation polynomials at
  the roots $\zeta_1, \ldots, \zeta_r$: $\mathbf{u}_i (\zeta_i) = \lambda_i
  \neq 0$ and \ $\mathbf{u}_i (\zeta_j) = 0$ if $j \neq i$ (see
  {\cite{em-07-irsea}}[Chap. 4] or {\cite{DiEm05:Cox}}). We deduce that
  \[ \langle \bar{\sigma} \mid \mathbf{u}_j \rangle = \sum_{k = 1}^r w_k
     \hspace{0.17em} \mathbf{u}_j (\zeta_k) = w_j \lambda_j \ and \
     \langle \bar{\sigma} \mid x_i \mathbf{u}_j \rangle = \zeta_{j, i} w_j
     \lambda_j, \]
  so that $\zeta_{j, i} = \frac{\langle \sigma | \nobracket x_i \mathbf{u}_j
  \rangle}{\langle \sigma | \nobracket \mathbf{u}_j \rangle} \nosymbol$. As
  $\mathbf{u}_i (\zeta_i) = \lambda_i$, we have $w_j = \frac{\langle \sigma
  | \nobracket \mathbf{u}_j \rangle }{\nobracket \nobracket \mathbf{u}_j
  (\zeta_{j, 1}, \ldots, \zeta_{j, n})}$.
\end{proof}

{\algorithm{\label{algo:dec}\textbf{Input:} $B$ is a set of monomials
    connected to $1$, $\sigma\in \langle
    B^{+}\cdot B^{+}\rangle^{*}$ such that  $H_{\sigma}^{B^+, B^+}$ is a
    flat extension of $H_{\sigma}^{B, B}$ of rank $|B|$.
\begin{itemize}
  \item Compute an orthogonal basis $\{ p_1, \ldots, p_r \}$ of $B$ for
  $\sigma$;

  \item Compute the matrices $M_k = \left( \frac{\langle \sigma \mid x_k
  \nobracket p_i p_j \rangle \nobracket}{\langle \sigma \mid \nobracket p^2_j
  \rangle \nobracket} \right)_{1 \leqslant i, j \leqslant r}$;

  \item Compute their common eigenvectors $\mathbf{u}_{_1}, \ldots,
  \mathbf{u}_r ;$
\end{itemize}
\textbf{Output}: For $j = 1, \ldots, r$,
\begin{itemize}
  \item $\zeta_j = \left( \frac{\langle \sigma | \nobracket x_1 \mathbf{u}_j
  \rangle}{\langle \sigma | \nobracket \mathbf{u}_j \rangle}, \ldots,
  \frac{\langle \sigma | \nobracket x_{{in}} \mathbf{u}_j
  \rangle}{\langle \sigma | \nobracket \mathbf{u}_j \rangle} \right)$

  \item $w_j = \frac{\langle \sigma | \nobracket \mathbf{u}_j \rangle
  }{\nobracket \nobracket \mathbf{u}_j (\zeta_{j, 1}, \ldots, \zeta_{j,
  n})}$
\end{itemize}}}

\begin{remark}
  Since the matrices $M_k$ commute and are diagonalizable with the same basis,
  their common eigenvectors can be obtained by computing the eigenvectors of a
  generic linear combination $l_1 M_1 + \cdots + l_n M_n$, $l_i \in
  \mathbb{R}$.
\end{remark}

\section{Cubature formula by convex optimization}

As described in the previous section, the computation of cubature formulae
reduces to a{\textbf{}} low rank Hankel matrix completion problem, using
the{\textbf{}} flat extension property. In this section, we describe a new
approach which relaxes this problem into a convex optimization problem.

Let $V \subset R$ be a vector space spanned by monomials
$\mathbf{x}^{\alpha}$ for $\alpha \in A \subset \mathbb{N}^n$. Our aim is
to construct a cubature formula for an integral function $I$ exact on $V$. Let
$\mathbf{i}= (I [\mathbf{x}^{\alpha}])_{\alpha \in A}$ be the sequence of
moments given by the integral $I$. We also denote $\mathbf{i} \in V^{\ast}$
the associated linear form such that $\forall v \in V \langle \mathbf{i}
\mid v \rangle = I [v]$.

For $k \in \mathbb{N}$, we denote by
\[ \mathcal{H}^k (\mathbf{i}) = \{ H_{\sigma} \mid \sigma \in R_{2
   k}^{\ast}, \sigma_{\alpha} =\mathbf{i}_{\alpha}\  \mathrm{for}\ \alpha \in A,
   H_{\sigma} \succcurlyeq 0 \}, \]
the set of semi-definite Hankel operators on $R_t$ is associated to moment
sequences which extend $\mathbf{i}$. We can easily check that $\mathcal{H}^k
(\mathbf{i})$ is a convex set. We denote by $\mathcal{H}_r^k (\mathbf{i})$
the set of elements of $\mathcal{H}^k (\mathbf{i})$ of rank $\leqslant r$.

A subset of $\mathcal{H}_r^k (\mathbf{i})$ is the set of Hankel operators
associated to cubature formulae of $r$ points:
\[ \mathcal{E}^k_r (\mathbf{i}) = \left\{ H_{\sigma} \in \mathcal{H}^k
   (\mathbf{i}) \mid \sigma = \sum_{i = 1}^r w_i \hspace{0.17em}
   \mathbf{e}_{\zeta_i}, \omega_i > 0, \zeta_i \in \mathbb{R}^n \right\} .
\]
We can check that $\mathcal{E}^k (\mathbf{i}) = \cup_{r \in \mathbb{N}}
\mathcal{E}^k_r (\mathbf{i})$ is also a convex set.

To impose the cubature points to be in a semialgebraic set $\mathcal{S}$
defined by equality \ and inequalities \ $\mathcal{S}= \{ \mathbf{x} \in
\mathbb{R}^n \mid g_1^0 (\mathbf{x}) = 0, \ldots, g_{n_1}^0 (\mathbf{x})
= 0, g_1^+ (\mathbf{x}) \geqslant 0, \ldots, g_{n_2}^+ (\mathbf{x})
\geqslant 0 \}$, one can refine the space of $\mathcal{H}^k (\mathbf{i})$ by
imposing that $\sigma$ is positive on the quadratic module (resp. preordering)
associated to the constraints $\mathbf{g}= \{ g_1^0, \ldots, g_n^0 ; g_1^+,
\ldots, g_n^+ \}$ {\cite{Lasserre:book}}. For the sake of simplicity, we don't
analyze this case here, which can be done in a similar way.

The Hankel operator $H_{\sigma} \in \mathcal{E}^k_r (\mathbf{i})$ associated
to a cubature formula of $r$ points is an element of $\mathcal{H}_r^k
(\mathbf{i})$. In order to find a cubature formula of minimal rank, we would
like to compute a minimizer solution of the following optimization problem:
\[ \min_{H \in \mathcal{H}^k (\mathbf{i})} \rank (H) \]
However this problem is NP-hard {\cite{Fazel2002}}. We therefore relax it into the
minimization of the nuclear norm of the Hankel operators, i.e. the
minimization of the sum of the singular values of the Hankel matrix {\cite{RFP2010}}. More
precisely, for a generic matrix $P \in \mathbb{R}^{s_t \times s_t}$, we
consider the following minimization problem:
\begin{equation}
  \min_{H \in \mathcal{H}^k (\mathbf{i})} \trace (P^t H P)
  \label{eq:min:norm}
\end{equation}
Let $(A, B) \in \mathbb{R}^{s_k \times s_k} \times \mathbb{R}^{s_k \times
s_k} \rightarrow \langle A, B \rangle = \trace (A B)$ denote the inner
product induced by the trace on the space of $s_k \times s_k$ matrices. The
optimization problem (\ref{eq:min:norm}) requires  minimizing the linear
form $H \rightarrow \trace (H P P^t) = \langle H, P P^t \rangle$ on the
convex set $\mathcal{H}^k (\mathbf{i})$. As the trace of $P^t H P$ is
bounded by below by $0$ when $H \succcurlyeq 0$ , our optimization problem
(\ref{eq:min:norm}) has a non-negative minimum $\geqslant 0$.

Problem (\ref{eq:min:norm}) is a Semi-Definite Program (SDP), which can be
solved efficiently by interior point methods. See
{\cite{NesterovNemirovski94}}. SDP is an important ingredient of relaxation
techniques in polynomial optimization. See {\cite{Lasserre:book,LaurentSurvey09}}.

Let $\Sigma^k = \left\{ p = \sum_{i = 1}^l p_i^2 \mid p_i \in R_k \right\}$ be
the set of polynomials of degree $\leqslant 2 k$ which are sums of squares,
let $\mathbf{x}^{(k)}$ be the vector of all monomials in $\mathbf{x}$ of
degree $\leqslant k$ and let $q (\mathbf{x}) = (\mathbf{x}^{(k)})^t P P^t
\hspace{1em} \mathbf{x}^{(k)} \in \Sigma^k$. Let $p_i (\mathbf{x})$ ($1
\leqslant i \leqslant s_k$) denote the polynomial $\langle P_i,
\mathbf{x}^{(k)} \rangle$ associated to the column $P_i$ of $P$. We have $q
(\mathbf{x}) = \sum_{i = 1}^{s_k} p_i (\mathbf{x})^2$ and for any $\sigma
\in R_{2 k}$,
\[ \trace (P^t H_{\sigma} P) = \langle H_{\sigma} \mid P P^t \rangle =
   \langle \sigma \mid q (\mathbf{x}) \rangle . \]
For any $l \in \mathbb{N}$, we denote by $\pi_l : R_l \rightarrow R_l$ the
linear map which associates to a polynomial $p \in R_l$ its homogeneous
component of degree $l$. We say that $P$ is a proper matrix if $\pi_{2 k} (q
(\mathbf{x})) \neq 0$ for all $\mathbf{x} \in \mathbb{R}^n$.

We are thus looking for cubature formula with a small number of points, which
correspond to Hankel operators with small rank. The next results describes the
structure of truncated Hankel operators, when the degree of truncation is high
enough, compared to the rank.

\begin{theorem}
  \label{thm:flat:trunc}Let $\sigma \in R_{2 k}^{\ast}$ and let $H_{\sigma}$
  be its truncated Hankel operator on $R_k$. If $H_{\sigma}$ is of rank $r
  \leqslant k$, then
  \[ \sigma \equiv \sum_{i = 1}^{r'} \omega_i \mathbf{e}_{\zeta_i} + \sum_{i
     = r' + 1}^r w_i \mathbf{e}_{\zeta_i} \circ \pi_{2 k}  \]
  with $\omega_i \in \mathbb{C} \setminus \{ 0 \}$, $\zeta_i \in
  \mathbb{C}^n$ distinct. If moreover $H_{\sigma} \succcurlyeq 0$, then
  $\omega_i > 0$ and $\zeta_i \in \mathbb{R}^n$ for $i = 1, \ldots, r$.
\end{theorem}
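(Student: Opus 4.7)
I would reduce the claim to a homogeneous Waring-type decomposition by introducing a homogenization variable. Let $x_0$ be a new variable and identify $R_{2k}$ with the space of degree-$2k$ forms in $(x_0,\xb)$ via $f(\xb)\leftrightarrow x_0^{2k}f(x_1/x_0,\ldots,x_n/x_0)$. Under this correspondence, the truncated Hankel $H_{\sigma}$ on $R_k$ coincides with the homogeneous catalecticant $C_{\sigma}$ on degree-$k$ forms in $(x_0,\xb)$, and in particular $\rank C_{\sigma}=\rank H_{\sigma}=r\le k$.

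The key step is a homogeneous analog of Theorem \ref{thm:flat:extension}: when the catalecticant rank $r$ is at most half the degree ($r\le k$), the apolar form $F_{\sigma}\in\CC[x_0,\xb]_{2k}$ admits a rank-$r$ Waring decomposition
\[
  F_{\sigma} \;=\; \sum_{i=1}^{r} w_i\,\ell_i^{2k},
\]
with $w_i\in\CC\setminus\{0\}$ and pairwise non-proportional linear forms $\ell_i\in\CC[x_0,\xb]_1$. I would establish this either by performing a generic linear change of coordinates in $(x_0,\xb)$ that brings every (projective) root of $\ker C_{\sigma}$ into the affine chart $\{x_0\neq 0\}$, and then invoking Theorem \ref{thm:flat:extension} together with Proposition 1 verbatim in the new chart, or by running the Gram--Schmidt construction of Algorithm \ref{algo:1} directly in the graded ring $\CC[x_0,\xb]$ to produce a monomial basis of $\CC[x_0,\xb]/\ker C_{\sigma}$ connected to $1$. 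This homogeneous apolarity step is the main technical obstacle and is essentially the Sylvester--Iarrobino theorem in the subgeneric range $r\le k$.

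Once the Waring decomposition is in hand, I split the $\ell_i$ according to the value of their $x_0$-coefficient. For those with nonzero $x_0$-coefficient, normalize $\ell_i=x_0+\sum_j\zeta_{i,j}x_j$; then $\ell_i^{2k}$ dehomogenizes to $(1+\sum_j\zeta_{i,j}x_j)^{2k}$, and as a linear form on $R_{2k}$ this is $w_i\,\eb_{\zeta_i}$ after absorbing a binomial constant into $w_i$; these produce the first $r'$ terms in the statement. For those with vanishing $x_0$-coefficient, $\ell_i=\sum_j\zeta_{i,j}x_j$ is a form in $\xb$ alone, so $\ell_i^{2k}$ is homogeneous of degree exactly $2k$ in $\xb$; paired with any $p\in R_{2k}$ only the top-degree part $\pi_{2k}(p)$ contributes, and the pairing gives $(\pi_{2k}p)(\zeta_i)$. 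This is exactly the action of $w_i\,\eb_{\zeta_i}\circ\pi_{2k}$, yielding the remaining $r-r'$ terms at infinity.

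For the positive case, $H_{\sigma}\succcurlyeq 0$ transfers to $C_{\sigma}\succcurlyeq 0$ as a quadratic form on degree-$k$ forms in $(x_0,\xb)$, and a real spectral argument parallel to Proposition \ref{prop:2} shows that the Waring decomposition can be carried out over $\RR$ with strictly positive weights: every $\ell_i$ may be chosen with real coefficients and $w_i>0$, giving $\zeta_i\in\RR^n$ (affine or direction-at-infinity) and positive weights for $i=1,\ldots,r$. The only genuinely nontrivial ingredient throughout is the homogeneous flat extension statement; everything else is dehomogenization bookkeeping.
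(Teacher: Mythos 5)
Your global strategy (homogenize, obtain a rank-$r$ decomposition of the degree-$2k$ data, then dehomogenize and split the points according to whether the $x_0$-coordinate vanishes) is exactly the paper's, and your dehomogenization bookkeeping and positivity transfer are fine. The problem is that the one step you yourself flag as ``the main technical obstacle'' --- the existence of a rank-$r$ decomposition of the homogeneous form when $\rank H_{\sigma}=r\leqslant k$ --- is precisely the content of the theorem, and neither of your two routes delivers it. Route (a) is circular as stated: to invoke Theorem \ref{thm:flat:extension} you need, inside the available degrees, a monomial set $B$ connected to $1$ with $|B|=r$, $B^+\subset R_k$ and $\rank H_{\sigma}^{B,B}=\rank H_{\sigma}=r$; a generic change of chart does not give this ``verbatim,'' and speaking of ``bringing every projective root of $\ker C_{\sigma}$ into $\{x_0\neq 0\}$'' presupposes that the ideal generated by the degree-$k$ kernel is zero-dimensional and that the decomposition points are its roots, which is exactly what is not yet known. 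The paper supplies the missing ingredient by a generic change of coordinates followed by Eisenbud's theorem that the generic initial ideal $J$ of $(\bar K)$ is Borel fixed: the degree-$k$ monomials outside $J$ dehomogenize to a set $B$ connected to $1$ of cardinality $r$, and connectedness to $1$ forces $\deg(B)<r\leqslant k$, hence $B^+\subset R_k$; this is the only place where the hypothesis $r\leqslant k$ is used, and your proposal never explains how that hypothesis enters.

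Route (b) also fails as written: Algorithm \ref{algo:1} is an affine Gram--Schmidt procedure requiring $\langle 1,1\rangle_{\sigma}=\langle\sigma\mid 1\rangle\neq 0$, whereas the homogeneous form $\bar{\sigma}$ vanishes on every degree except $2k$, so in the graded ring $\CC[x_0,\xb]$ the pairing degenerates at the very first step and the algorithm cannot even start; and even in an affine chart the algorithm may output \texttt{failed}, so its success must be argued, again via the structural input above. Appealing to a Sylvester--Iarrobino type theorem in the range $r\leqslant k$ would replace the proof by an external black box rather than give one. To repair the proposal, replace the two sketches by the generic-initial-ideal argument (or an equivalent construction of a connected-to-$1$ monomial complement of $\ker H_{\sigma}$ of degree $<k$), after which the application of Theorem \ref{thm:flat:extension}, the dehomogenization split into $\mathbf{e}_{\zeta_i}$ and $\mathbf{e}_{\zeta_i}\circ\pi_{2k}$ terms, and the positive semidefinite case go through as you describe.
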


\begin{proof}
  The substitution $\tau_0 : S_{[2 k]} \rightarrow R_{2 k}$ which replaces
  $x_0$ by $1$ is an isomorphism of $\mathbb{K}$-vector spaces. Let
  $\tau_0^{\ast} : R_{2 k}^{\ast} \rightarrow S_{[2 k]}^{\ast}$ be the
  pull-back map on the dual ($\tau_0^{\ast} (\sigma) = \sigma \circ \tau_0$).
  Let $\overline{\sigma} = \sigma \circ \tau_0 = \tau_0^{\ast} (\sigma) \in
  S_{[2 k]}^{\ast}$ be the linear form induced by $\sigma$ on $S_{[2 k]}$ and
  let $H_{\bar{\sigma}} : S_{[k]} \rightarrow S_{[k]}^{\ast}$ be the
  corresponding truncated operator on $S_{[k]}$. The kernel $\bar{K}$ of
  $H_{\bar{\sigma}}$ is the vector space spanned by the homogenization in
  $x_0$ of the elements of the kernel $K$ of $H_{\sigma}$.

  Let $\succ$ be the lexicographic ordering such that $x_0 \succ \cdots \succ
  x_n$. By {\cite{Eis94}}[Theorem 15.20, p. 351], after a generic change of
  coordinates, the initial $J$ of the homogeneous ideal $(\bar{K}) \subset S$
  is Borel fixed. That is, if $x_i \mathbf{x}^{\alpha} \in J$, then $x_j
  \mathbf{x}^{\alpha} \in J$ for $j > i$. Let $\bar{B}$ be the set of
  monomials of degree $k$, which are not in $J$.  Note $J$ is Borel fixed and
  different from $S_{[2 k]}$, $x_0^k \in \bar{B}$. Similarly we check that if
  $x_0^{\alpha_0} \cdots x_n^{\alpha_n} \in J$ with $\alpha_1 = \cdots =
  \alpha_{l - 1} = 0$, then $x_0^{\alpha_0 + 1} x_l^{\alpha_l - 1} \cdots
  x_n^{\alpha_n} \in J$. This shows that $B = \tau_0 (\bar{B})$ is connected
  to $1$.

  As $\langle \bar{B} \rangle \oplus \langle J \rangle = \langle \bar{B}
  \rangle \oplus \bar{K} = S_{[k]}$ where $\bar{K} = \ker H_{\bar{\sigma}}$,
  we have $| B | = r$. As $B$ is connected to $1$, $\deg (B) < r \leqslant k$
  and $B^+ \subset R_k$.

  By the substitution $x_0 = 1$, we have $R_k = \langle B \rangle \oplus K$
  with $K = \ker H_{\sigma}$. Therefore, $H_{\sigma}$ is a flat extension of
  $H_{\sigma}^{B, B}$. Via the flat extension theorem \ref{thm:flat:extension},
  there exists $\lambda_i \in \mathbb{K} \setminus \{ 0 \}$, $\bar{\zeta}_i =
  (\zeta_{i, 0}, \zeta_{i, 1}, \ldots, \zeta_{i, n}) \in \mathbb{K}^{n + 1}$
  distinct for $i = 1, \ldots, r$ such that
  \begin{equation}
    \bar{\sigma} \equiv \sum_{i = 1}^r \lambda_i \mathbf{e}_{\bar{\zeta}_i}
  \ \mathrm{on}\ S_{[2 k]} \label{eq:hom} .
  \end{equation}
  Notice that for any $\lambda \neq 0$, $\mathbf{e}_{\bar{\zeta}_i} =
  \lambda^{- k} \mathbf{e}_{\lambda \bar{\zeta}_i}$ on $S_{[2 k]}$.

  By an inverse change of coordinates, the points $\bar{\zeta}_i$ of
  (\ref{eq:hom}) are transformed into some points $\bar{\zeta}_i = (\zeta_{i,
  0}, \zeta_{i, 1}, \ldots, \zeta_{i, n}) \in \mathbb{K}^{n + 1}$ such that
  $\zeta_{i, 0} \neq 0$ (say for $i = 1, \ldots, r'$) and the remaining $r -
  r'$ points with $\zeta_{i, 0} = 0$. \ The image by $\tau_0^{\ast}$ of
  $\mathbf{e}_{\bar{\zeta}_i} \in S_{[2 k]}^{\ast}$ with $\zeta_{i, 0} \neq
  0$ is
  \[ \tau_0^{\ast} (\mathbf{e}_{\bar{\zeta}_i}) \equiv \zeta_{i, 0}^{2 k}
     \mathbf{e}_{\zeta_i}  \]
  where $\zeta_i = \frac{1}{\zeta_{i, 0}} (\zeta_{i, 1}, \ldots, \zeta_{i,
  n})$. The image by $\tau_0^{\ast}$ of $\mathbf{e}_{\bar{\zeta}_i} \in
  S_{[2 k]}^{\ast}$ with $\zeta_{i, 0} = 0$ is a linear form \ $\in R_{2
  k}^{\ast}$, which vanishes on all the monomials $\mathbf{x}^{\alpha}$ with
  $| \alpha | < 2 k$, since their homogenization in degree $2 k$ is $x_0^{2 k
  - | \alpha | } \mathbf{x}^{\alpha}$ and their evaluation at $\bar{\zeta}_i
  = (0, \zeta_{i, 1}, \ldots, \zeta_{i, n})$ gives $0$. The value of
  $\tau_0^{\ast} (\mathbf{e}_{\bar{\zeta}_i})$ \ at $\mathbf{x}^{\alpha}$
  with $| \alpha | = 2 k$ \ $\zeta_{i, 1}^{\alpha_1} \cdots \zeta_{i,
  n}^{\alpha_n} =\mathbf{e}_{\zeta_i} (\mathbf{x}^{\alpha})$ where
  $\zeta_i = (\zeta_{i, 1}, \ldots, \zeta_{i, n})$. We deduce that
  \[ \tau_0^{\ast} (\mathbf{e}_{\bar{\zeta}_i}) \equiv
     \mathbf{e}_{\zeta_i} \circ \pi_{2 k} \]
  The flat extension theorem implies that if $H_{\sigma} \succcurlyeq 0$ then
  $\lambda_i > 0$ and $\bar{\zeta}_i \in \mathbb{R}^{n + 1}$ in the
  decomposition (\ref{eq:hom}). By dehomogenization, we have $\omega_i =
  \lambda_i \zeta_{i, 0}^{2 k} > 0$, \ $\zeta_i = \frac{1}{\zeta_{i, 0}}
  (\zeta_{i, 1}, \ldots, \zeta_{i, n}) \in \mathbb{R}^n$ for $i = 1, \ldots,
  r'$ and $\zeta_i = (\zeta_{i, 1}, \ldots, \zeta_{i, n}) \in \mathbb{R}^n$
  for $i = r' + 1, \ldots, n$.
\end{proof}

We exploit this structure theorem to show that if the truncation order is sufficiently high, a minimizer of (\ref{eq:min:norm}) corresponds to a cubature formula.

\begin{theorem}
  Let $P$ be a proper operator and $k \geqslant \frac{\deg (V) + 1}{2}$.
  Assume that there exists $\sigma^{\ast} \in R_{2 k}^{\ast}$ be such that
  $H_{\sigma^{\ast}}$ is a minimizer of (\ref{eq:min:norm}) of rank $r$ with
  $r \leqslant k$. Then $H_{\sigma^{\ast}} \in \mathcal{E}^k_r (\mathbf{i})$
  i.e. there exists $\omega_i > 0$ and $\zeta_i \in \mathbb{R}^n$ such that
  \[ \sigma^{\ast} \equiv \sum_{i = 1}^r \omega_i \mathbf{e}_{\zeta_i} . \]
\end{theorem}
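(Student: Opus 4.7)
The plan is to apply the structure theorem (Theorem~\ref{thm:flat:trunc}) to $H_{\sigma^{\ast}}$: since it is positive semidefinite and of rank $r\leq k$, we obtain a decomposition
\[
\sigma^{\ast}\equiv\sum_{i=1}^{r'}\omega_i\,\mathbf{e}_{\zeta_i}+\sum_{i=r'+1}^{r}w_i\,\mathbf{e}_{\zeta_i}\circ\pi_{2k},
\]
with $\omega_i,w_i>0$ and distinct $\zeta_i\in\mathbb{R}^n$. The task is to show that $r'=r$, i.e.\ to rule out all ``points at infinity''; once this is done, $H_{\sigma^{\ast}}\in\mathcal{E}_r^k(\mathbf{i})$ is immediate.

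Toward a contradiction, assume $r'<r$, and set
\[
\sigma'\,:=\,\sum_{i=1}^{r'}\omega_i\,\mathbf{e}_{\zeta_i}\in R_{2k}^{\ast}.
\]
I would first verify that $H_{\sigma'}\in\mathcal{H}^k(\mathbf{i})$. Positivity is immediate since $\sigma'$ is a conic combination of point evaluations. For the moment constraint, note that for every $\alpha\in A$ one has $|\alpha|\leq\deg(V)\leq 2k-1$ by the hypothesis $k\geq(\deg(V)+1)/2$; hence $\pi_{2k}(\mathbf{x}^{\alpha})=0$, so the infinity terms contribute nothing to $\sigma^{\ast}_{\alpha}$. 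Thus $\sigma'_{\alpha}=\sigma^{\ast}_{\alpha}=\mathbf{i}_{\alpha}$, confirming feasibility. This is precisely where the hypothesis on $k$ enters: the moment constraints live strictly below degree $2k$ and are therefore insensitive to infinity contributions, so discarding the latter preserves admissibility.

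Next, I would compare objective values via the SOS polynomial $q(\mathbf{x})=(\mathbf{x}^{(k)})^{t}PP^{t}\mathbf{x}^{(k)}=\sum_{i=1}^{s_k}p_i(\mathbf{x})^2$. Using $\trace(P^{t}H_{\tau}P)=\langle\tau\mid q\rangle$, a direct computation using linearity and the vanishing of $\pi_{2k}$ on low-degree parts yields
\[
\langle\sigma^{\ast}\mid q\rangle-\langle\sigma'\mid q\rangle\;=\;\sum_{i=r'+1}^{r}w_i\,\pi_{2k}(q)(\zeta_i).
\]
The infinity points $\zeta_i$ arise from nonzero projective directions $(0,\zeta_{i,1},\ldots,\zeta_{i,n})$ and so are nonzero in $\mathbb{R}^n$. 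Moreover $\pi_{2k}(q)=\sum_i\pi_k(p_i)^2$ is a sum of squares which, by properness of $P$, never vanishes on $\mathbb{R}^n\setminus\{0\}$; hence $\pi_{2k}(q)(\zeta_i)>0$ for each $i>r'$. Combined with $w_i>0$, the right-hand side is strictly positive, contradicting the minimality of $H_{\sigma^{\ast}}$. Therefore $r'=r$ and $\sigma^{\ast}=\sum_{i=1}^{r}\omega_i\,\mathbf{e}_{\zeta_i}$ with $\omega_i>0$ and $\zeta_i\in\mathbb{R}^n$, as required.

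The substantive ingredient is Theorem~\ref{thm:flat:trunc}, which channels the truncated rank bound into an affine-plus-infinity decomposition; the main obstacle is getting the bookkeeping right on the degree $2k$ stratum. The rest is the clean interplay between properness of $P$ (which penalizes top-degree content of $q$ through $\pi_{2k}(q)$) and the degree gap between the moment constraints in $A$ and the truncation order $2k$: properness provides the strictly positive objective gain obtained by discarding any nontrivial infinity mass, while the gap guarantees that this discarding does not violate the $\mathbf{i}$-constraints.
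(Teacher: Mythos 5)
Your proposal is correct and follows essentially the same route as the paper: apply Theorem~\ref{thm:flat:trunc}, drop the ``infinity'' terms to get a feasible $\sigma'$ (feasibility preserved because the moments in $A$ have degree $<2k$), and use properness of $P$ to show the objective strictly decreases unless $r'=r$, contradicting minimality. Your treatment is in fact marginally more careful than the paper's, since you note explicitly that the infinity points are nonzero in $\mathbb{R}^n$ so that properness applies.
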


\begin{proof}
  By Theorem \ref{thm:flat:trunc},
  \[ \sigma^{\ast} \equiv \sum_{i = 1}^{r'} \omega_i \mathbf{e}_{\zeta_i} +
     \sum_{i = r' + 1}^r w_i \mathbf{e}_{\zeta_i} \circ \pi_{2 k}, \]
  with \ $\omega_i > 0$ and $\zeta_i \in \mathbb{R}^n$ for $i = 1, \ldots,
  r$.

  Let us suppose that $r \neq r' .$ As $k \geqslant \frac{\deg (V) + 1}{2}$,
  the elements of $V$ are of degree $< 2 k$, therefore $\sigma^{\ast}$ and
  $\sigma' \equiv \sum_{i = 1}^{r'} \omega_i \mathbf{e}_{\zeta_i} $ coincide
  on $V$ and $H_{\sigma'} \in \mathcal{H}^k (\mathbf{i})$. We have the
  decomposition
  \[ \trace (P H_{\sigma^{\ast}} P) = \langle \sigma^{\ast} \mid q
     \rangle = \langle \sigma' \mid q \rangle_+  \sum_{i = 1}^{r'} \omega_i
     \pi_{2 k} (q) (\zeta_i) . \]
  The homogeneous component of highest degree $\pi_{2 k} (q)$ of $q
  (\mathbf{x}) = \sum_{i = 1}^{s_k} p_i (\mathbf{x})^2$ is the sum of the
  squares of the degree-$k$ components of the $p_i$:
  \[ \pi_{2 k} (q) = \sum_{i = 1}^{s_k}  (\pi_k (p_i^{}))^2, \]
  so that $\sum_{i = 1}^{r'} \omega_i \pi_{2 k} (q) (\zeta_i) \geqslant 0$. As
  $\trace (P H_{\sigma^{\ast}} P)$ is minimal, we must have $ \sum_{i =
  r' + 1}^r \omega_i \pi_{2 k} (q) (\zeta_i) = 0$, which implies that $\pi_{2
  k} (q) (\zeta_i)$ for $i = r' + 1, \ldots, r$. However, this is impossible, since
  $P$ is proper. We thus deduce that $r' = r$, which concludes the proof of the
  theorem.
\end{proof}

This theorem shows that an optimal solution of the minimization problem
(\ref{eq:min:norm}) of small rank $(r \leqslant k)$ yields a cubature formula,
which is exact on $V$. Among such minimizers, we have those of minimal
rank as shown in the next proposition.

\begin{proposition}
  Let $k \geqslant \frac{\deg (V) + 1}{2}$ and $H$ be an element of
  $\mathcal{H}^k (\mathbf{i})$ with minimal rank $r$. If $k \geqslant r$,
  then $H \in \mathcal{E}^k_r (\mathbf{i})$ and it is either an extremal
  point of $\mathcal{H}^k (\mathbf{i})$ or on a face of $\mathcal{H}^k
  (\mathbf{i})$, which is included in $\mathcal{E}^k_r (\mathbf{i})$.
\end{proposition}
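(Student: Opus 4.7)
The plan is to split the claim into two parts: first that $H \in \mathcal{E}^k_r(\mathbf{i})$, and then the face-structure statement. Both rely on the structure theorem \ref{thm:flat:trunc} combined with the assumed minimality of the rank $r$.

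For the first part, I write $H = H_{\sigma}$ for some $\sigma \in R_{2k}^{\ast}$; the hypotheses $r \leqslant k$ and $H \succcurlyeq 0$ let me apply Theorem \ref{thm:flat:trunc} to obtain
\[
\sigma \equiv \sum_{i=1}^{r'} \omega_i\, \mathbf{e}_{\zeta_i} + \sum_{i=r'+1}^{r} w_i\, \mathbf{e}_{\zeta_i} \circ \pi_{2k}
\]
with $\omega_i, w_i > 0$ and $\zeta_i \in \mathbb{R}^n$. The crucial observation is that $k \geqslant \frac{\deg(V)+1}{2}$ forces every monomial in $V$ to have degree at most $2k-1$, so the ``at infinity'' contributions $\mathbf{e}_{\zeta_i} \circ \pi_{2k}$ vanish identically on $V$. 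Hence $\sigma' := \sum_{i=1}^{r'} \omega_i\, \mathbf{e}_{\zeta_i}$ agrees with $\sigma$, and therefore with $\mathbf{i}$, on $V$, so $H_{\sigma'} \in \mathcal{H}^k(\mathbf{i})$. If $r' < r$, then $\rank H_{\sigma'} = r' < r$ contradicts the minimality of $r$; therefore $r' = r$ and $H \in \mathcal{E}^k_r(\mathbf{i})$.

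For the face statement I would take any convex decomposition $H = tH_1 + (1-t)H_2$ with $H_1, H_2 \in \mathcal{H}^k(\mathbf{i})$ and $t \in (0,1)$. From the identity $v^t H v = t\, v^t H_1 v + (1-t)\, v^t H_2 v$ together with the positive semidefiniteness of all three operators, one deduces $\ker H = \ker H_1 \cap \ker H_2$, and hence $\rank H_i \leqslant \rank H = r$. By minimality of $r$ on $\mathcal{H}^k(\mathbf{i})$ we must have $\rank H_i = r$, and the argument of the previous paragraph applied to each $H_i$ then yields $H_i \in \mathcal{E}^k_r(\mathbf{i})$. Therefore the smallest face of $\mathcal{H}^k(\mathbf{i})$ containing $H$ lies entirely in $\mathcal{E}^k_r(\mathbf{i})$: either it is the singleton $\{H\}$, making $H$ an extreme point, or it is a positive-dimensional face included in $\mathcal{E}^k_r(\mathbf{i})$, as required.

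The main obstacle is quite modest once Theorem \ref{thm:flat:trunc} is in hand. The two points requiring care are (i) verifying that discarding the $\pi_{2k}$ component preserves the moment constraints $\sigma_\alpha = \mathbf{i}_\alpha$ indexed by $\alpha \in A$, which is precisely where the hypothesis $k \geqslant \frac{\deg(V)+1}{2}$ is used, and (ii) invoking the standard PSD fact that the kernel of a convex combination of positive semidefinite operators equals the intersection of their kernels. Notably the reasoning of the first paragraph is reused verbatim for $H_1$ and $H_2$ in the second, which is what makes the face characterization come almost for free.
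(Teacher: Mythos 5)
Your proof is correct, and the first half (apply Theorem \ref{thm:flat:trunc}, drop the terms $\mathbf{e}_{\zeta_i}\circ\pi_{2k}$, which vanish on $V$ since $\deg(V)<2k$, and use minimality of $r$ to force $r'=r$) is exactly the paper's argument. The second half follows a genuinely different, and in fact leaner, route. The paper places $H_\sigma$ in the relative interior of a face $F$, takes $H_{\sigma_1}$ in a small ball of $F$ around $H_\sigma$, derives $\ker H_\sigma=\ker H_{\sigma_1}=\ker H_{\sigma_2}$ from positivity and rank minimality, and then invokes the flat extension theorem a second time (via a monomial basis $B$ of degree $<k$ complementing the common kernel) to conclude that $\sigma_1,\sigma_2$ are $r$-atomic \emph{supported at the same points} $\zeta_i$ as $\sigma$, finishing with a line-and-convexity argument to cover all of $F$. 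You instead take an arbitrary decomposition $H=tH_1+(1-t)H_2$, use the same PSD kernel-inclusion to get $\rank H_i\leqslant r$, force $\rank H_i=r$ by minimality, and simply recycle the first-paragraph argument to get $H_i\in\mathcal{E}^k_r(\mathbf{i})$; the conclusion then comes from the standard convex-geometry fact that the smallest face of $\mathcal{H}^k(\mathbf{i})$ containing $H$ consists precisely of the points occurring in such decompositions. What your route buys is brevity: no second appeal to the flat extension machinery and no local ball argument. What it gives up is the extra structural information the paper's proof extracts, namely that every element of the face shares the same nodes $\zeta_i$ and differs only in the weights; that stronger statement is what the authors implicitly lean on in the remark that an interpolatory minimal-rank formula must be an extreme point. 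One small polish: state (or cite) the minimal-face characterization you use -- that the set $\{H\}\cup\{H_1\in\mathcal{H}^k(\mathbf{i}) : H=tH_1+(1-t)H_2,\ t\in(0,1),\ H_2\in\mathcal{H}^k(\mathbf{i})\}$ is indeed the smallest face containing $H$ -- since that is the hinge on which your final sentence turns.
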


\begin{proof}
  Let $H_{\sigma} \in \mathcal{E}^k_r (\mathbf{i})$ be of minimal rank $r$.

  By Theorem (\ref{thm:flat:trunc}), $\sigma \equiv \sum_{i = 1}^{r'} \omega_i
  \mathbf{e}_{\zeta_i} + \sum_{i = r' + 1}^r w_i \mathbf{e}_{\zeta_i}
  \circ \pi_{2 k}$ with $\omega_i > 0$ and $\zeta_i \in \mathbb{R}^n$ for $i
  = 1, \ldots, r$. The elements of $V$ are of degree $< 2 k$, therefore
  $\sigma$ and \ $\sigma' \equiv \sum_{i = 1}^{r'} \omega_i
  \mathbf{e}_{\zeta_i} $ coincide on $V$. We deduce that $H_{\sigma'} \in
  \mathcal{H}^k (\mathbf{i})$.

  As $\rank H_{\sigma'} = r' \leqslant r$ and $H_{\sigma} \in \mathcal{H}^k
  (\mathbf{i})$ is of minimal rank $r$, $r = r'$ and \ $H_{\sigma} \in
  \mathcal{E}^k_r (\mathbf{i})$.

  Let us assume that $H_{\sigma}$ is not an extremal point of $\mathcal{H}^k
  (\mathbf{i})$. Then it is in the relative interior of a face $F$ of
  $\mathcal{H}^k (\mathbf{i})$. For any $H_{\sigma_1}$ in a sufficiently
  small ball of $F$ around $H_{\sigma}$, there exist $t \in \nobracket ] 0, 1
  [\nobracket$ and $H_{\sigma_2} \in F$ such that
  \[ H_{\sigma} = t H_{\sigma_1} \nocomma \noplus + (1 - t) H_{\sigma_2} . \]
  The kernel of $H_{\sigma}$ is the set of polynomials $p \in R_k$ such that
  \[ 0 = H_{\sigma} (p, p) = t H_{\sigma_1} \nocomma \noplus (p, p) + (1 - t)
     H_{\sigma_2} (p, p) . \]
  As $H_{\sigma_i} \nocomma \noplus \succcurlyeq 0$, we have $H_{\sigma_i}
  \nocomma \noplus (p, p) = 0$ for $i = 1, 2$. This implies that $\ker
  H_{\sigma} \subset \ker H_{\sigma_i}$, for $i = 1, 2$. From the inclusion
  $\ker H_{\sigma_1} \cap \ker H_{\sigma_2} \subset \ker H_{\sigma}$, we
  deduce that
  \[ \ker H_{\sigma} = \ker H_{\sigma_1} \cap \ker H_{\sigma_2} . \]
  As $H_{\sigma}$ is of minimal rank $r$, we have \ $\dim \ker H_{\sigma}
  \geqslant \dim \ker H_{\sigma_i}$. This implies that $\ker H_{\sigma} = \ker
  H_{\sigma_1} = \ker H_{\sigma_2} .$

  As $r \leqslant k$, $\mathcal{A}_{\sigma}$ has a monomial basis $B$
  (connected to $1$) in degree $< k$ and $R_k = \langle B \rangle \oplus \ker
  H_{\sigma}$. Consequently, $H_{\sigma}$ (resp. $H_{\sigma_i}$) is a flat
  extension of $H_{\sigma}^{B, B}$ (resp. $H_{\sigma_i}^{B, B}$) and we have
  the decomposition
  \[ \sigma_1 \equiv \sum_{i = 1}^r \omega_{i, 1}^{} \mathbf{e}_{\zeta_i}
     \nocomma, \hspace{1em} \sigma_2 \equiv \sum_{i = 1}^r \omega_{i, 2}^{}
     \mathbf{e}_{\zeta_i}, \]
  with $\omega_{i^{}, j} > 0$, $i = 1, \ldots, r$, $j = 1, 2$. We deduce that
  $H_{\sigma_1} \in \mathcal{E}^k_r (\mathbf{i})$ and all the elements of
  the line $(H_{\sigma}, H_{\sigma_1})$ which are in $F$ are also in
  $\mathcal{E}^k_r (\mathbf{i})$. Since $F$ is convex, we deduce that $F
  \subset \mathcal{E}^k_r (\mathbf{i})$.
\end{proof}

\begin{remark}
A cubature formula is {{\em interpolatory}} when the weights are
uniquely determined from the points. From the the previous Theorem and Proposition,
we see that if a cubature formula is of minimal rank and
interpolatory, then it is an extremal point of $\mathcal{H}^k (\mathbf{i})$.
\end{remark}
According to the previous proposition, by minimizing the nuclear norm of a random matrix,
we expect to find an element of minimal rank in one of the faces of $\mathcal{H}^k
(\mathbf{i})$, provided $k$ is big enough. This yields the following
simple algorithm, which solves the SDP problem, and checks the flat extension
property using Algorithm \ref{algo:1}. Furthermore it  computes the decomposition using
Algorithm \ref{algo:dec} or increases the degree if there is no flat extension:

{\algorithm{\begin{itemize}
  \label{algo:2}\item $k \assign \left\lceil \frac{\deg (V)}{2} \right\rceil
  ;$ notflat := true; P:= random $s_k \times s_k $ matrix;

  \item While{\textbf{}} (notflat) do
  \begin{itemize}
    \item Let $\sigma$ be a solution of the SDP problem: $\min_{H \in
    \mathcal{H}^k (\mathbf{i})} \trace (P^t H P) ;$

    \item If $H_{\sigma}^k$ is not a flat extension, then $k \assign k + 1$;
    else notflat:=false;
  \end{itemize}
  \item Compute the decomposition of $\sigma = \sum_{i = 1}^r w_i
  \hspace{0.17em} \mathbf{e}_{\zeta_i}, \omega_i > 0, \zeta_i \in
  \mathbb{R}^n$.
\end{itemize}}}

\section{Examples}

We now illustrate our cubature method on a few explicit examples.

\begin{example}[Cubature on a square] Our first application is a well known case, namely, the square domain $\Omega = [- 1, 1] \times [- 1, 1]$.
  We solve the SDP problem \ref{eq:min:norm}, with a random matrix $P$ and with no
  constraint on the support of the points. In following table, we give the
  degree of the cubature formal (i.e. the degree of the polynomials for which
  the cubature formula is exact), the number $N$ of cubature points, the
  coordinates of the cubature points and the associated weights.

\begin{center}\small
    \begin{tabular}{|c|c|c|c|}
      \hline
      Degree & N & Points & Weights\\
      \hline
      3 & 4 & $\pm$(0.46503, 0.464462) & 1.545\\
      &  & $\pm$(0.855875, -0.855943) & 0.454996\\
      \hline
      5 & 7 & $\pm$(0.673625, 0.692362) & 0.595115\\
      &  & $\pm$(0.40546, -0.878538) & 0.43343\\
      &  & $\pm$(-0.901706, 0.340618) & 0.3993\\
      &  & (0, 0) & 1.14305\\
      \hline
      7 & 12 & $\pm$(0.757951, 0.778815) & 0.304141\\
      &  & $\pm$(0.902107, 0.0795967) & 0.203806\\
      &  & $\pm$(0.04182, 0.9432) & 0.194607\\
      &  & $\pm$(0.36885, 0.19394) & 0.756312\\
      &  & $\pm$(0.875533, -0.873448) & 0.0363\\
      &  & $\pm$(0.589325, -0.54688) & 0.50478\\
      \hline
    \end{tabular}
\end{center}
\end{example}
The cubature points are symmetric with respect to the origin $(0, 0)$. The
computed cubature formula involves the minimal number of points, which all lie
in the domain $\Omega$.

\begin{example}[Barycentric Wachspress coordinates on a pentagon]
 Here we consider the pentagon $C$ of vertices $v_1 = (0, 1), v_2 = (1, 0), v_3 =
  (- 1, 0), v_4 = (- 0.5, - 1), v_5 = (0.5, - 1)$.
  \begin{center}
    \resizebox{9cm}{!}{\includegraphics{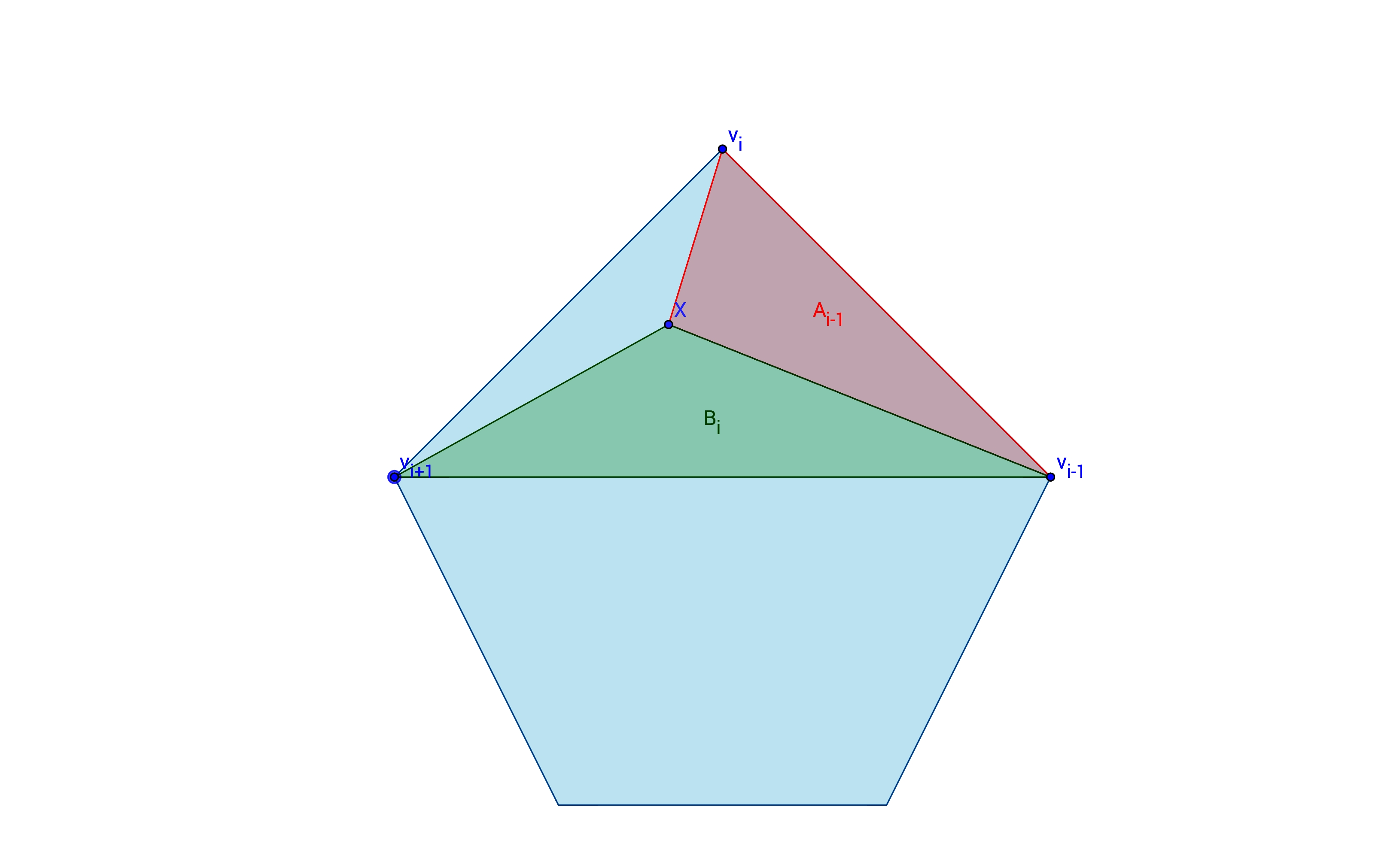}}
  \end{center}
  To this pentagon, we associate (Wachpress) barycentric coordinates \cite{Wachpress75}, which
  are defined as follows. The weighted function associated to the vertex $v_i$
  is defined as:
  \[ w_i (\xb) = \frac{A_{i - 1} - B_i + A_i}{A_{i - 1} \cdot A_i} \]
  where $A_i$ is the signed area of the triangle $(\mathbf{x}, v_{i - 1},
  v_i)$ and $B_i$ is the area of $(\mathbf{x}, v_{i + 1}, v_{i - 1})$. The
  coordinate function associated to $v_i$ is:
  \[ \lambda_i (\xb) = \frac{w_i (\xb)}{\sum_{i = 1}^5 w_i (\xb)} . \]
  These coordinate functions satisfy:
  \begin{itemize}
    \item $\lambda_i (\xb) \ge 0$ for $\mathbf{x} \in C$

    \item $\sum_{i = 1}^5 \lambda_i = 1,$

    \item $\sum_{i = 1}^5 v_i \cdot \lambda_i (\xb) = \xb$
  \end{itemize}
  For all polynomials $p \in R =\mathbb{R} [u_0, u_1, u_2, u_3, u_4]$, we
  consider
  \[ I [p] = \int_{\xb \in \Omega} p \circ \lambda (\xb) d \xb \]
  We look for a cubature formula $\sigma \in R^{\ast}$ of the form:
  \begin{equation}
    \langle \sigma \mid p \rangle = \sum_{j = 1}^r w_j p (\zeta_j)
  \end{equation}
  with $w_i > 0$, $\zeta_i \in \mathbb{R}^5$, such that $I [p] = \langle
  \sigma \mid p \rangle$ for all polynomial $p$ of degree $\le 2$.

  The moment matrix $H_{\sigma}^{B, B}$ associated to $B = \{1, u_0, u_1, u_2,
  u_3, u_4 \}$ involves moments of degree $\leqslant 2$:
  {\small
    \[ H_{\sigma}^{B, B} = \left( \begin{array}{cccccc}
       2.5000 & 0.5167 & 0.5666 & 0.5167 & 0.4500 & 0.4500\\
       0.5167 & 0.2108 & 0.1165 & 0.0461 & 0.0440 & 0.0992\\
       0.5666 & 0.1165 & 0.2427 & 0.1165 & 0.0454 & 0.0454\\
       0.5167 & 0.0461 & 0.1165 & 0.2108 & 0.0992 & 0.0440\\
       0.4500 & 0.0440 & 0.0454 & 0.0992 & 0.1701 & 0.0911\\
       0.4500 & 0.0992 & 0.0454 & 0.0440 & 0.0911 & 0.1701
  \end{array} \right) \]
 }
  Its rank is $\rank (H^{B, B}_{\sigma}) = 5$.

  We compute $H^{B^+, B^+}_{\sigma}$. In this matrix there are 105 unknown
  parameters. We solve the following SDP problem
  \begin{eqnarray}
    \min &  & \trace (H_{\sigma}^{B^+, B^+}) \\
    s.t. &  & H^{B^+, B^+}_{\sigma} \succcurlyeq 0 \nonumber
  \end{eqnarray}
  which yields a solution with minimal rank 5. Since the rank of the solution
  matrix is the rank of $H_{\sigma}^{B, B}$, we do have a flat extension.
  Applying Algorithm \ref{algo:1}, we find the orthogonal polynomials
  $\rho_i$, the matrices of the operators of multiplication by a variable,
  their common eigenvectors, which gives the following cubature points and
  weights:
  {\small
  \[ \begin{array}{|l|l|}\hline
       \mathrm{Points} & \mathrm{Weights}\\\hline
       (0.249888, - 0.20028, 0.249993, 0.350146, 0.350193) & 0.485759\\
       (0.376647, 0.277438, - 0.186609, 0.20327, 0.329016) & 0.498813\\
       (0.348358, 0.379898, 0.244967, - 0.174627, 0.201363) & 0.509684\\
       (- 0.18472, 0.277593, 0.376188, 0.329316, 0.201622) & 0.490663\\
       (0.242468, 0.379314, 0.348244, 0.200593, - 0.170579) & 0.51508\\ \hline
     \end{array} \]
   }
 \end{example}

\bibliographystyle{plain}

\end{document}